\newtheorem{thm}{Theorem}[section]
\newtheorem{lem}[thm]{Lemma}
\newtheorem{thm-con}[thm]{Theorem-Conjecture}
\numberwithin{equation}{section}
\theoremstyle{definition}
\newcommand{\f}{\mathbb F}
\begin{document}

\title[Dembowski-Ostrom polynomials and Dickson polynomials]{Dembowski-Ostrom polynomials and Dickson polynomials}

\author[S. U. Hasan]{Sartaj Ul Hasan}
\address{Department of Mathematics, Indian Institute of Technology Jammu, Jammu 181221, India}
\email{sartaj.hasan@iitjammu.ac.in}

\author[M. Pal]{Mohit Pal}
\address{Department of Mathematics, Indian Institute of Technology Jammu, Jammu 181221, India}
\email{2018RMA0021@iitjammu.ac.in}

\begin{abstract}
We give a complete classification of Dembowski-Ostrom polynomials from the composition of Dickson polynomials of arbitrary kind and monomials over finite fields. Moreover, by using a variant of the Weil bound for the number of points of affine algebraic curves over finite fields, we discuss the planarity of the obtained Dembowski-Ostrom polynomials. Dembowski-Ostrom polynomials play a crucial role in coding theory and cryptography.
\end{abstract}

\keywords{Finite field, Dickson polynomial, Dembowski-Ostrom polynomial, planar function}

\subjclass[2010]{12E20,11T55, 05A10, 11T06}

\maketitle

\section{Introduction}\label{S1}
Let $p$ be an odd prime and $e$ be a positive integer. We denote by $\f_q$ the finite fields with $q =p^e$ elements, and by $\f_q^*$ the multiplicative group of non-zero elements of $\f_q$. Let $f$ be a function from finite field $\f_q$ to itself. Since any function from a finite field to itself can be uniquely represented by a polynomial of degree at most $q-1$, we shall often consider $f$ to be a polynomial in $\f_q[X]$. A polynomial $f$ is called permutation polynomial if it permutes the elements of the finite fields. Permutation polynomials have been studied widely in the past decades due to their wide range of applications in coding theory and cryptography. One may refer to \cite {Hou} for a recent survey on permutation polynomials. 

For any non-negative integer $k$, the $k$-th Dickson polynomial of the first and second kind are given by
\begin{equation*}
  D_k(X,a) :=\sum_{i=0}^{\lfloor \frac{k}{2} \rfloor} \frac{k}{k-i} {k-i \choose i} (-a)^i X^{k-2i}
\end{equation*}
and
\begin{equation*}
E_k(X,a) :=\sum_{i=0}^{\lfloor \frac{k}{2} \rfloor} {k-i \choose i} (-a)^i X^{k-2i},                                                                                 
\end{equation*}
respectively, where $a \in \mathbb{F}_q$ is a parameter. Dickson polynomial of the first kind was introduced by Dickson \cite{LED} in his Ph.D. thesis of 1897 and its variant, which is now called Dickson polynomial of the second kind, was introduced by Schur \cite{IS} more than two decades later in 1923. It may be noted that these polynomials are closely related to the classical Chebyshev polynomials.
Dickson polynomials of the first and second kind have been studied extensively, especially in the context of their permutation behaviour. We refer interested readers to the monograph \cite{Dickson-book} for more details about Dickson polynomials. One may also refer to the handbook \cite{HB} for some recent progress about Dickson polynomials and related objects. A trigonometric approach for Dickson polynomials has been recently considered by Lima and Panario \cite{LP}. The Dickson polynomials have also been used to study the $c$-differential uniformity of some functions over finite fields~\cite{HPRS}. 

In 2013, Wang and Yucas \cite{WY} defined Dickson polynomial of the $(m+1)$-th kind, where $m$ is a non-negative integer, as follows
\begin{equation}
\label{domk}
 D_{k,m}(X,a) :=\sum_{i=0}^{\lfloor \frac{k}{2} \rfloor} \frac{k-mi}{k-i} {k-i \choose i} (-a)^i X^{k-2i},
\end{equation}
where $a \in \mathbb{F}_q$ is a parameter and $D_{0,m}(X,a)=2-m$ and $D_{1,m}(X,a)=X.$ It is clear from the definition that $D_{k,0}(X,a)= D_k(X,a)$ and $D_{k,1}(X,a)=E_k(X,a)$. In \cite{WY}, the authors also gave a recurrence relation,
\begin{equation}
\label{rr}
D_{k,m}(X,a) = mE_{k}(X,a)-(m-1)D_{k}(X,a)
\end{equation}
to find Dickson polynomial of the $(m+1)$-th kind from Dickson polynomial of the first and second kind. It is easy to see from (\ref{rr}) that over any finite field $\mathbb{F}_q$ of characteristic $p$, $D_{k,m+p}(X,a) = D_{k,m}(X,a)$ and thus, we shall restrict ourselves to the case $m<p$.

Let $f$ be a function from $\mathbb{F}_{p^e}$ to itself. For any $\epsilon \in \f_{p^e}$, let $\Delta_f(X, \epsilon) := f(X+\epsilon)-f(X)-f(\epsilon)$. A function $f$ is said to be planar if the corresponding function $\Delta_f(X,\epsilon)$ is a permutation of $\f_{p^e}$ for all $\epsilon \in \f_{p^e}^*$. It is straightforward to see that in the case of characteristic two, $X+\epsilon$ and $X$ have the same image under $\Delta_f$ and hence there is no planar function over a finite field of even characteristic. Planar functions are very important objects as they have received plethora of applications in coding theory and cryptography among others. A function $f$ is called exceptional planar if it is planar over $\f_{p^e}$ for infinitely many $e$. The problem of classifying planar functions from a given class of polynomials is not easy in general. For instance, it was a long standing problem to classify planar functions from the monomials over finite fields of odd characteristic, and eventually was resolved recently by Zieve~\cite{MZ}.

A {\it Dembowski-Ostrom} (DO) polynomial over a finite field $\f_{p^e}$ is a polynomial that admits the following shape
\begin{equation*}
\displaystyle \sum_{i,j} a_{ij}X^{p^i+p^j},
\end{equation*}
where $a_{ij} \in \f_{p^e}$. These polynomials were introduced by Dembowski and Ostrom \cite{DO} to study some projective planes. However, these polynomials later turned out to be a rich source of planar functions. In fact, all the known exceptional planar function are DO polynomials except for the monomial $X^{\frac{3^{\alpha}+1}{2}}$ over $\mathbb{F}_{3^e}$ with $\gcd(e, \alpha)=1$ and $e$ odd. Planar DO polynomials are closely related with Dickson polynomials of the first kind. One may note that all the known exceptional planar DO polynomials were actually stemmed from Dickson polynomials. This observation inspired Coulter and Matthews \cite{CM} to classify DO polynomials from Dickson polynomials of first and second kind over finite fields of odd characteristic. Recently, the authors along with Fernando \cite{FHP} classified DO polynomials from the composition of reversed Dickson polynomials of arbitrary kind and monomials over finite fields of odd characteristic.

In continuation of our work in \cite{FHP}, we now consider the problem of classifying DO polynomials stemming from the Dickson polynomials of arbitrary kind over finite fields of odd characteristic. In fact, we give a complete classification of DO polynomials arising from the composition of Dickson polynomial of the $(m+1)$-th kind (where $m\geq2$) and the monomial $X^d$, where $d$ is a positive integer, in odd characteristic. The motivation behind considering this composition actually stems from the fact that the exceptional planar polynomials $X^{10} \pm X^6 -X^2$ are essentially the composition of the Dickson polynomials $D_5(X, \pm1)$ and
the monomial $X^2$. The DO polynomials do not have any constant term. Therefore, throughout this paper, we shall consider the polynomial $D_{k,m}(X^d,a)-D_{k,m}(0,a)$, which, in short, shall be denoted by $\mathfrak D_{k,m}$. Here $D_{k,m}(0,a)$ is the constant term which arises only when $k$ is even.

We now give the structure of the paper. Section \ref{S2}, \ref{S3} and \ref{S4} are devoted to the classification of DO polynomials from the polynomials $\mathfrak D_{k,2}$, $\mathfrak D_{k,3}$ and $\mathfrak D_{k,4}$, respectively. By using the properties of the Dickson polynomials of arbitrary kind and the characteristic of the underlying finite field, the classification of
DO polynomials from $\mathfrak D_{k,m}$, when $m\geq 5$, have been considered in Section~\ref{S5}. Section \ref{S6} is devoted to the planarity aspect of all the DO polynomials obtained from the previous sections which we have listed in the Appendix \ref{A1}. 

Throughout the paper, we always assume that $n, s,t, \alpha, \beta, \gamma, \delta$ are nonnegative integers unless specified otherwise.

%Part of this work has been presented at the 14th International Conference on Finite Fields and their Applications (Fq14) held at Simon Fraser University, Vancouver, Canada
%during June 03-07, 2019.

\section{DO polynomials from the polynomial $\mathfrak{D}_{k,2}$} \label{S2}
As alluded to in Introduction, the classification of DO polynomials arising from Dickson polynomials of the first and second kind has already been considered in \cite{CM}. Therefore, in this section, we shall consider classification of DO polynomials arising from the Dickson polynomials of the third kind. 

We know that DO polynomials are closed under the left or right composition with $X^p$, therefore, it is sufficient to consider the cases when $(d,p)=1$. One may also note that the monomial $X^{rd}$,  where $r$ is positive integer, is DO polynomial if and only if $rd=p^{\beta}(p^{\alpha }+1)$, where $\alpha, \beta$ are nonnegative integers and $\beta$ is the highest exponent of $p$ such that $p^{\beta} \mid r$. It is clear that whenever $(r,p)=1$, we must have $\beta=0$. Also notice that $D_{k,m}(X,0)=X^k$, which is a DO polynomial if and only if $k$ is of the form $p^{\beta}(p^{\alpha}+1)$. Therefore in what follows, we shall always consider $a \in \mathbb{F}_{q}^*$. These assumptions and conventions shall remain effective across the Sections.

Recall that $\mathfrak{D}_{k,2} =D_{k,2}(X^d,a)-D_{k,2}(0,a)$, whose expression is given by
\begin{equation*}
 \mathfrak D_{k,2} =\sum_{i=0}^{\lfloor \frac{k-1}{2} \rfloor} \frac{k-2i}{k-i} {k-i \choose i} (-a)^i X^{(k-2i)d},
\end{equation*}
i.e.,
\begin{equation} \label{fk}
 \mathfrak D_{k,2}=X^{kd}-(k-2)aX^{(k-2)d}+\frac{(k-3)(k-4)}{2!}a^2X^{(k-4)d}-  \cdots.
\end{equation}
The following theorem gives the conditions on $k,d$ and $p$ for which the polynomial $\mathfrak D_{k,2}$ turns out to be a DO polynomial.
\begin{thm} 
Let $q$ be a power of odd prime $p$ and $a\in\mathbb{F}_q^*$. The polynomial $\mathfrak D_{k,2}$ is DO polynomial over $\mathbb{F}_q$ if and only if one of the following holds.
\begin{enumerate}[(i)]
\item $k=1$ and $d= p^n(p^{\alpha}+1)$ for non-negative integers $\alpha$ and $n$.
\item $k=2$ and $d= p^n(p^{\alpha}+1)/2$ for non-negative integers $\alpha$ and $n$.
\item $k=3$ and either
\begin{enumerate}
 \item $p=3$ and $d= p^n(p^{\alpha}+1)$ for non-negative integers $\alpha$ and $n$; or
 \item $p=5$ and $d= 2p^n$ for non-negative integer $n$.
\end{enumerate}
\item $k=4,6,12$, $p=3$ and $d= p^n$ for non-negative integer $n$.
\item $k=5$ and either
\begin{enumerate}
\item $p=3$ and $d= 2p^n$ for non-negative integer $n$; or
\item $p=5$ and $d= 2p^n$ for non-negative integer $n$.
\end{enumerate}

\item $k=9$, $p=3$ and $d= 4p^n$ for non-negative integer $n$.

\end{enumerate}
\end{thm}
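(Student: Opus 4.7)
My plan is to exploit the identity $\tfrac{k-2i}{k-i}\binom{k-i}{i}=\binom{k-i-1}{i}$, so that
$$\mathfrak D_{k,2}=\sum_{i=0}^{\lfloor(k-1)/2\rfloor}\binom{k-i-1}{i}(-a)^i X^{(k-2i)d},$$
and translate the DO-requirement into Diophantine conditions. A monomial $X^{(k-2i)d}$ is present if and only if $p\nmid\binom{k-i-1}{i}$; writing $d=p^n e$ with $\gcd(e,p)=1$ and $k-2i=p^{v_i}u_i$ with $\gcd(u_i,p)=1$, the DO condition on its exponent becomes $u_i e=p^{\gamma_i}+1$ for some $\gamma_i\geq 0$. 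The whole case analysis is then driven by the elementary fact that an equation $p^\beta=Ap^\alpha+B$ with $A,B$ small integers coprime to $p$ forces $\alpha=0$ (by reduction modulo $p$); in particular $p^\beta=2p^\alpha+1$ admits only $(p,\alpha,\beta)=(3,0,1)$ and $p^\beta=3p^\alpha+2$ only $(p,\alpha,\beta)=(5,0,1)$.

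For $k=1,2$ the polynomial is a single monomial, yielding (i) and (ii). For $3\le k\le 12$, I would use Lucas' theorem to tabulate which coefficients $\binom{k-i-1}{i}$ survive in each odd characteristic $p$, and solve the resulting simultaneous equations via the reductions above. Outside characteristic $3$, the two leading monomials $X^{kd},X^{(k-2)d}$ are almost always both present, and the ratio $u_0/u_1$ triggers one of the elementary reductions to force $p\in\{3,5\}$ and pin down $e$; this produces (iii)(b) (from $p^\beta=3p^\alpha+2$ at $k=3$) and (v)(b) (similarly at $k=5$). In characteristic $3$, significant vanishings occur: at $k=3$ the two equations coincide and give (iii)(a); at $k=4$ and $k=6$ the pair $(2e,4e)$ reduces by $p^\beta=2p^\alpha+1$ to $e=1$, producing the $k=4,6$ instances of (iv); at $k=5$ the middle term vanishes, leaving the pair $\{5d,d\}$ and yielding (v)(a) with $e=2$; at $k=9$ only the exponents $\{9d,7d,3d,d\}$ survive and the system reduces to $e=3^\alpha+1,\,7e=3^\beta+1$ with unique solution $e=4$, giving (vi); at $k=12$ only $\{12d,10d,6d,4d\}$ survive and the system has unique solution $e=1$, giving the $k=12$ instance of (iv). For $k\in\{7,8,10,11\}$, every surviving configuration in every odd characteristic contains an equation that fails the elementary reductions.

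The main obstacle is ruling out all $k\ge 13$ uniformly. My plan is a pigeonhole argument: for every $k\ge 13$ and every odd prime $p$, I claim at least three of the coefficients $\binom{k-i-1}{i}$ (with $i$ in a bounded range) are nonzero modulo $p$, proved via a Lucas-theoretic count on the base-$p$ digits of $k$, the worst case being characteristic $3$ and handled by explicit digit bookkeeping on $k=\sum_j c_j 3^j$. Three such surviving indices $i_1<i_2<i_3$ yield three equations $u_{i_j}e=p^{\gamma_j}+1$ whose pairwise ratios are bounded rationals, and iterating the elementary reductions forces $p\in\{3,5\}$ together with $e$ in a small finite set; the finitely many remaining $(p,e,k)$ triples are then eliminated by exhibiting an additional surviving monomial whose DO-condition is incompatible. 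The delicate point, and the main obstacle in the proof, is the uniform Lucas-theoretic count establishing that at least three coefficients always survive for $k\ge 13$, which I would isolate as a separate lemma.
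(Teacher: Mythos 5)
Your reformulation of the coefficients via $\tfrac{k-2i}{k-i}\binom{k-i}{i}=\binom{k-i-1}{i}$ is correct and pleasant (the paper works with the rational form directly), and your analysis for $k\le 12$ reproduces the paper's case list, including the correct surviving exponent sets at $k=9$ and $k=12$ in characteristic $3$ and the reductions $p^{\beta}=2p^{\alpha}+1$ and $p^{\beta}=3p^{\alpha}+2$. The gap is in the part you yourself flag as the main obstacle: the treatment of $k\ge 13$ is only a plan, and the plan as stated has two problems. First, the key lemma (``at least three coefficients $\binom{k-i-1}{i}$ with $i$ in a bounded range survive modulo $p$ for every $k\ge 13$'') is asserted, not proved; a proof that leaves its self-identified central lemma unestablished is incomplete. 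Second, even granting that lemma, the elimination mechanism you describe --- ``pairwise ratios are bounded rationals, iterating the elementary reductions forces $p\in\{3,5\}$ and $e$ in a small finite set, then finitely many remaining $(p,e,k)$ triples are eliminated'' --- does not obviously terminate, because nothing in it bounds $k$: fixing $p$ and $e$ still leaves one candidate $k$ for each exponent $\gamma$, hence infinitely many a priori. The reductions of type $p^{\beta}=Ap^{\alpha}+B$ forcing $\alpha=0$ are the right tool only when one of the exponents $\gamma_i$ can be $0$, which happens for small $k$; they are not the mechanism that kills large $k$.

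The mechanism that does work, and that the paper uses, is simpler and needs only \emph{two} surviving monomials chosen according to the residue of $k$ modulo $p$: if $X^{(k-2i)d}$ and $X^{(k-2j)d}$ both survive with $p\nmid(k-2i)$ and $p\nmid(k-2j)$, then $(k-2i)d=p^{\alpha}+1$ and $(k-2j)d=p^{\beta}+1$, and since $k$ is large both $\alpha,\beta\ge 1$ (indeed $\ge 2$ when needed), so $2(j-i)d=p^{\alpha}-p^{\beta}$ forces $p\mid d$ after dividing out at most one factor of $p$ from $2(j-i)$ --- contradicting $\gcd(d,p)=1$. The paper organizes $k\ge 7$ into the cases $k\not\equiv 0,2\pmod p$, $k\equiv 2\pmod p$, and $k\equiv 0\pmod p$ (the last split into $p=3$ and $p>3$), exhibiting in each case an explicit surviving pair with $j-i\in\{1,2,3,4\}$; the only delicate point is showing, for $p=3$, $k\equiv 0\pmod 3$ and $k\ge 15$, that the coefficient of $X^{(k-8)d}$ is nonzero, which it does by proving $k\not\equiv 6\pmod 9$. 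I would recommend replacing your three-coefficient pigeonhole by this two-coefficient residue analysis: it removes the unproved counting lemma entirely and closes the argument for all $k\ge 13$ (in fact all $k\ge 7$ outside the exceptional small cases).
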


\begin{proof} It is easy to see that each of the cases listed in the theorem correspond to the DO polynomials. Now it only remains to show that all the DO polynomials obtained from the polynomial $\mathfrak D_{k,2}$ shall fall in any one of the above mentioned cases. 

We begin with the polynomial $\mathfrak D_{1,2}=X^d$ and it will be a DO polynomial only if $d$ is of the form $p^{\alpha}+1$. The polynomial $\mathfrak D_{2,2}=X^{2d}$ will be a DO polynomial only if $d$ is of the form $ (p^{\alpha}+1)/2$. If the polynomial $\mathfrak D_{3,2}= X^{3d}-aX^{d}$ is a DO polynomial then $d = p^{\alpha}+1$ and $3d = p^s(p^{\beta}+1)$. Now $p^s \mid 3$ implies that either $p=3$ and $s=1$ or $p>3$ and $s=0$. In the event of $p=3$, $\mathfrak D_{3,2}$ will be DO polynomial only if $d$ is of the form $p^{\alpha}+1$. However, in case of $p>3$, we have $d = p^{\alpha}+1$ and $3d = p^{\beta}+1$ and these equations would lead to $3p^{\alpha}+2 = p^{\beta}$, which is true if and only if $\alpha = 0$, $\beta=1$ $p = 5$ and $d=2$. The polynomial $\mathfrak D_{4,2}=X^{4d}-2aX^{2d}$ will be a DO polynomial only if $2d = p^{\alpha}+1$ and $4d = p^{\beta}+1$. Combining these two equations, we get $2p^{\alpha}+1 = p^{\beta}$, which is true if and only if $\alpha = 0$, $\beta=1$,  $p = 3$ and $d=1$. For the polynomial $\mathfrak D_{5,2}=X^{5d}-3aX^{3d}+a^2X^{d}$, we are going to have two distinct cases, namely, $p=3$ and $p>3$. If $p=3$ then the coefficients of $X^{5d}$ and $X^d$ are nonzero. Thus, if $\mathfrak D_{5,2}$ is a DO polynomial then $d = 3^{\alpha}+1$ and $5d = 3^{\beta}+1$. Combining these two equations, we have $5\cdot 3^{\alpha}+4 = 3^{\beta}$, which forces $\alpha = 0$, $\beta=2$ and $d=2$. If $p>3$ then the coefficients of $X^{5d}$, $X^{3d}$ and $X^d$ are nonzero. Therefore, $\mathfrak D_{5,2}$ will be a DO polynomial only if $d = p^{\alpha}+1$, $3d = p^{\beta}+1$ and $5d = p^s(p^{\gamma}+1)$. Combining first two equations, we get $3p^{\alpha}+2 = p^{\beta}$, which is true if and only if $\alpha = 0$, $\beta=1$, $p=5$ and $d=2$.  Now putting these values into third equation, we have $10=5^s(5^{\gamma}+1)$, which gives $s=1$ and $\gamma=0$. Similarly, for the polynomial $\mathfrak D_{6,2}=X^{6d}-4aX^{4d}+3a^2X^{2d}$, we have two cases. If $p=3$ then the coefficients of $X^{6d}$ and $X^{4d}$ are nonzero. Thus, if $\mathfrak D_{6,2}$ is a DO polynomial then $4d = 3^{\alpha}+1$ and $\displaystyle 6d = 3^s(3^{\beta}+1)$. Since $3^s \mid 6$, we have $s=1$ and hence, second equation reduces to $2d =3^{\beta}+1$. Combining these two equations, we get $2\cdot3^{\beta}+1 = 3^{\alpha}$, which forces $\beta = 0$, $\alpha=1$ and $d=1$. If $p>3$ then the coefficients of $X^{2d}$, $X^{4d}$ and $X^{6d}$ are nonzero. In this case, $\mathfrak D_{6,2}$ will be a DO polynomial only if $2d = p^{\alpha}+1$, $4d = p^{\beta}+1$ and $6d=p^{\gamma}+1$. Combining first two equations, we get $2p^{\alpha}+1 = p^{\beta}$, which forces $\alpha = 0$. Thus, we get $p^{\beta}=3$, which is a contradiction to our assumption that $p>3$. For $k\geq 7$, consider the following cases.

\textbf{Case 1} Let $k \not\equiv 0,2~(\mbox{mod}~p)$. In this case, the coefficients of $X^{kd}$ and $X^{(k-2)d}$ in $\mathfrak D_{k,2}$ are nonzero. Therefore, if $\mathfrak D_{k,2}$ is a DO polynomial then $kd = p^{\alpha}+1$ and $(k-2)d = p^{\beta}+1$ and hence, $2d = p^{\alpha} - p^{\beta}$. Since $k \geq 7$, we have $\alpha, \beta \geq 1$, which implies that $p\mid d$. This gives us contradiction to the fact that $(p,d)=1$. Therefore, $\mathfrak D_{k,2}$ is not DO.

\textbf{Case 2} Let $k \equiv 2~(\mbox{mod}~p)$. In this case, coefficient of $X^{kd}$ in $\mathfrak D_{k,2}$ is nonzero. Now if $p=3$ then $k\equiv 2 \pmod 3$ implies that $3\nmid (k-3)$ and 
$3\nmid (k-4)$ and thus, the coefficient of $X^{(k-4)d}$ is nonzero. If $p>3$ then $k\equiv 2 \pmod p$ implies that $k\not\equiv 3,4 \pmod p$ and again, the coefficient of $X^{(k-4)d}$ is nonzero. Thus, if $\mathfrak D_{k,2}$ is a DO polynomial then $kd = p^{\alpha}+1$ and $(k-4)d = p^{\beta}+1$. Combining these two equations yields $4d = p^{\alpha}-p^{\beta}$. Since $k \geq 7$, we have $\alpha,\beta \geq 1$, which implies that $p \mid d$, a contradiction. Therefore, $\mathfrak D_{k,2}$ is not DO.

\textbf{Case 3} Let $k \equiv 0~(\mbox{mod}~p)$. Here we shall consider two subcases, namely, $p=3$ and $p>3$.

\textbf{Subcase 3.1} Let $p=3$. In this case, the polynomial $\mathfrak D_{9,2}=X^{9d}+2aX^{7d}+2a^3X^{3d}+a^4X^d$ will be a DO polynomial only if $9d = 3^s(3^{\alpha}+1)$, $7d = 3^{\beta}+1$, $3d=3^t(3^{\gamma}+1)$ and $d=3^{\delta}+1$. Since $3^s \mid 9$ and $3^t\mid 3$, we must have $s=2$ and $t=1$. Therefore, the first and third equation reduce to $d = 3^{\alpha}+1$ and $d=3^{\gamma}+1$. Combining first two equations, we get $3^{\beta} = 7\cdot3^{\alpha}+6$, which forces $\alpha = 1$, $ \beta = 3$ and $d = 4$. Similarly, if the polynomial $\mathfrak D_{12,2}=X^{12d}+2aX^{10d}+a^3X^{6d}+2a^4X^{4d}$ is a DO polynomial then $12d = 3^s(3^{\alpha}+1)$, $10d = 3^{\beta}+1$, $6d~=~3^t(3^{\gamma}+1)$ and $4d = 3^{\delta}+1$. Now $3^s \mid 12$ and $3^t \mid 6$ implies that $s=1$ and $t=1$, respectively. Therefore, the first and third equations reduce to $4d = 3^{\alpha}+1$ and $2d = 3^{\gamma}+1$, respectively. Now combining second and third equations, we get $3^{\beta} = 5\cdot 3^{\gamma}+4$, which forces $\gamma =0$ that, in turn, gives $d=1$, $\beta = 2$ and $\alpha = \delta = 1$. 

In the rest of this subsection, we shall prove that the polynomial $\mathfrak D_{k,2}$ is never DO whenever $k\geq15$ and $k\equiv0 \pmod 3$. Suppose on the contrary that $\mathfrak D_{k,2}$ is DO. Since $k \equiv 0~(\mbox{mod}~3)$, therefore $k \not\equiv 1,2(\mbox{mod}~3)$, hence the coefficients of $X^{kd}$ and $X^{(k-2)d}$ are nonzero. Since $\mathfrak D_{k,2}$ is DO by our assumption, therefore $kd=3^s(3^{\alpha}+1)$ and $(k-2)d=3^{\beta}+1,$ where $s$ is the largest exponent of $3$ which divides $k$. Notice that since $k\geq 15$, we must have $\beta \geq 3$, which implies that $d\equiv1 \pmod 3$. From first equation, since $3^s\mid k$, therefore, $k=u3^s$, for some positive integers $u$ and $s$. Putting this value of $k$ in the first equation, we have, $ud=3^{\alpha}+1$. We now consider two cases, namely, $\alpha =0$ and $\alpha \geq 1$. In the case $\alpha =0$, we have $ud=2$, therefore either $d=2$ and $u=1$ or $d=1$ and $u=2$. First case is not possible as $d\equiv1 \pmod 3$. In the case of $d=1$ and $u=2$, we have $k=2\cdot3^s$ and $k-2=3^{\beta}+1$. Combining these two equations, we have $2\cdot3^{s-1}-3^{\beta-1}=1$. Again we have two cases, namely, $s=1$ and $s\geq2$. If $s=1$, we have $3^{\beta-1}=1$ which is a contradiction as $\beta \geq 3$. In the case of $s\geq 2$, we have $2\cdot3^{s-1}-3^{\beta-1}=1$ and taking modulo $3$ on both sides , we get $0\equiv1 \pmod 3$, a contradiction. In the case $\alpha\geq 1$, equation $ud=3^{\alpha}+1$ implies that $ud\equiv 1 \pmod 3$ and since $d\equiv 1 \pmod 3$ therefore $u\equiv 1 \pmod 3$. We now show that in this case, $k\not \equiv 6 \pmod 9$. Recall that $k=u3^s$. If $u\equiv 1 \pmod 3$ then $u=3n+1$ for some non-negative integer $n$. If $n=0$, then $u=1$ and $k=3^s$. But $k\geq15$, we must have $s\geq3$, which implies that $k\equiv0 \pmod 9$ and hence, $k \not \equiv 6 \pmod 9$. In the case $n\geq1$, we have $$k=u3^s = (3n+1)3^s = n3^{s+1}+3^s.$$ If $s\geq2$ then $k\equiv 0 \pmod 9$ and if $s=1$ then $k=9n+3$, which implies that $k\equiv 3 \pmod 9$. Therefore $k\not \equiv 6 \pmod 9$.

We now show that the fifth term $$\frac{(k-5)(k-6)(k-7)(k-8)}{4!}a^4X^{(k-8)d}$$ will exist. Note that $k\equiv 0 \pmod 3$ implies that $3\nmid (k-5)$, $3\nmid (k-7)$ and $3\nmid (k-8)$. Since $k\not\equiv6~(\mbox{mod}~9)$, therefore the highest exponent of $3$ that can divide the numerator of the coefficient of $X^{(k-8)d}$ is $1$ and the highest exponent of $3$ that divides $4!$ is $1$, therefore the coefficient of $X^{(k-8)d}$ is nonzero. Since $\mathfrak D_{k,2}$ is DO polynomial, therefore $(k-2)d=3^{\beta}+1$ and $(k-8)d=3^{\gamma}+1$. Now since $k\geq15$, therefore $\beta, \gamma \geq 2$. Combining these two equations, we have $6d = 3^{\beta}-3^{\gamma}$ but then $3\mid d$ which is a contradiction as $(d,3)=1$. Thus, our assumption that $\mathfrak D_{k,2}$ is DO is wrong. This proves the desired result.

\textbf{Subcase 3.2} Let $p>3$. In this case, $k\not\equiv2,3,4~(\mbox{mod}~p)$. Therefore, the coefficients of $X^{(k-2)d}$ and $X^{(k-4)d}$ are nonzero. Thus, if $\mathfrak D_{k,2}$ is a DO polynomial then $(k-2)d=p^{\alpha}+1$ and $(k-4)d=p^{\beta}+1$. Combining these two equations, we have $2d=p^{\alpha}-p^{\beta}$. Since $k\geq7$, we must have $\alpha, \beta \geq1$, which implies that $p\mid d$. This leads to a contradiction. Therefore, $\mathfrak D_{k,2}$ is not DO in this case.
\end{proof}

\section{DO polynomials from the Polynomial $\mathfrak{D}_{k,3}$ }\label{S3}
Recall that $\mathfrak{D}_{k,3} = D_{k,3}(X^d,a)-D_{k,3}(0,a)$ which is given by
\begin{equation*}
 \mathfrak D_{k,3} =\sum_{i=0}^{\lfloor \frac{k-1}{2} \rfloor} \frac{k-3i}{k-i} {k-i \choose i} (-a)^i X^{(k-2i)d}
\end{equation*}
i.e.,
\begin{equation}\label{gk}
\begin{split}
 \mathfrak D_{k,3}=X^{kd}-(k-3)aX^{(k-2)d}+\frac{(k-3)(k-6)}{2!}a^2X^{(k-4)d}- \\
 \frac{(k-4)(k-5)(k-9)}{3!}a^3X^{(k-6)d}+\cdots.
\end{split}
\end{equation} 
Since, for $p=3$, $\mathfrak D_{k,3} = \mathfrak D_{k,0}$. Thus, for the case $p=3$, $\mathfrak D_{k,3}$ will be a DO polynomial whenever $\mathfrak D_{k,0}$ is DO. Consequently, the case $p=3$, follows immediately from \cite[Theorem 2.1]{CM}. For $p\geq5$, we prove the following theorem.  

\begin{thm} Let $q$ be a power of odd prime $p\geq5$. The polynomial $\mathfrak D_{k,3} $ is DO polynomial over $\mathbb{F}_q$ if and only if one of the following holds.

\begin{enumerate}[(i)]
   \item $k=1$ and $d= p^n(p^{\alpha}+1)$ for non-negative integers $\alpha$ and $n$.
   \item $k=2$ and $d= p^n(p^{\alpha}+1)/2$ for non-negative integers $\alpha$ and $n$.
   \item $k=3$ and $d= p^n(p^{\alpha}+1)/3$ for non-negative integers $\alpha$ and $n$.
   \item $k=5$, $p=5$ and $d= 2p^n$ for non-negative integer $n$.
\end{enumerate}
\end{thm}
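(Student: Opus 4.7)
I would follow the template of the analogous theorem for $\mathfrak D_{k,2}$ in Section~\ref{S2}: verify (i)--(iv) by direct substitution, and then establish the converse by handling $k \le 6$ individually and dispatching all $k \ge 7$ by a uniform case split on $k$ modulo $p$. The forward direction is routine, since in each of (i)--(iv) every monomial appearing in $\mathfrak D_{k,3}$ has an exponent of the form $p^n(p^\alpha + 1)$.

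For $k \in \{1, 2, 3\}$, $\mathfrak D_{k,3}$ is already the monomial $X^{kd}$: trivially for $k = 1, 2$, and because the coefficient $-(k-3)a$ vanishes when $k = 3$. Since $(d, p) = 1$ and $p \ge 5$, the $p$-factor in $kd = p^n(p^\alpha + 1)$ must divide $k \in \{1, 2, 3\}$, giving (i)--(iii). For $k = 4$, $\mathfrak D_{4,3} = X^{4d} - a X^{2d}$; the system $4d = p^\alpha + 1$, $2d = p^\beta + 1$ reduces to $2 p^\beta + 1 = p^\alpha$, which admits no solution with $p \ge 5$. For $k = 5$, $\mathfrak D_{5,3} = X^{5d} - 2a X^{3d} - a^2 X^d$, and the equations $d = p^{\alpha_3} + 1$, $3d = p^{\alpha_2} + 1$ force $3 p^{\alpha_3} + 2 = p^{\alpha_2}$, hence $\alpha_3 = 0$, $p = 5$, $d = 2$; one then checks $5d = 10 = 5(5^0 + 1)$, yielding (iv). For $k = 6$, the $i = 2$ coefficient carries the factor $k - 6 = 0$, so $\mathfrak D_{6,3} = X^{6d} - 3a X^{4d}$, and $2d = p^\alpha - p^\beta$ with $\alpha, \beta \ge 1$ gives $p \mid d$, a contradiction.

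For $k \ge 7$ the strategy is to locate, for each residue of $k$ modulo $p$, a pair of indices $j_1 < j_2$ in $\{0, 1, 2, 3\}$ such that both coefficients in~(\ref{gk}) at $i = j_1, j_2$ are nonzero \emph{and} $\gcd(k - 2 j_\ell, p) = 1$. The DO hypothesis then forces $(k - 2 j_\ell) d = p^{\alpha_\ell} + 1$ without any $p$-prefactor, and subtraction yields $2(j_2 - j_1) d = p^{\alpha_1} - p^{\alpha_2}$; since $k \ge 7$ makes both $\alpha_\ell \ge 1$ in all cases except a few easily dispatched edge cases (where $(k - 2 j_2) d \in \{1, 2\}$), and since $2 (j_2 - j_1) \in \{2, 4, 6\}$ is coprime to $p \ge 5$, we conclude $p \mid d$, contradicting $(d, p) = 1$. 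Reading~(\ref{gk}), the coefficients at $i = 1, 2, 3$ vanish precisely when $k \equiv 3$, $k \equiv 3, 6$, and $k \equiv 4, 5, 9 \pmod p$ respectively, while $p \mid (k - 2j)$ means $k \equiv 2j \pmod p$. A sweep through the residues shows that a usable pair always exists: $(0, 1)$ works when $k \not\equiv 0, 2, 3 \pmod p$; otherwise use $(1, 2)$ for $k \equiv 0$, $(0, 2)$ for $k \equiv 2$, and $(0, 3)$ for $k \equiv 3 \pmod p$.

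The hardest subcase is $k \equiv 3 \pmod p$, where both $i = 1$ and $i = 2$ coefficients vanish and one is forced onto the pair $(0, 3)$; this succeeds because $\{4, 5, 9\}$ is disjoint from $\{3\}$ modulo any $p \ge 5$, and because $k \equiv 3$ and $k - 6 \equiv -3$ are both coprime to $p$. The resulting identity $6d = p^{\alpha_1} - p^{\alpha_2}$ produces the usual contradiction once the mild edge case $(k-6)d \in \{1, 2\}$ (which under $k \equiv 3 \pmod p$ and $p \ge 5$ only arises as $(k, d, p) = (8, 1, 5)$) is disposed of by a direct check on $\mathfrak D_{8,3}$ over $\mathbb{F}_5$.
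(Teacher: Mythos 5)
Your proposal is correct and follows essentially the same route as the paper: handle $k\le 6$ directly, then for larger $k$ split on $k\bmod p$ into the classes $k\not\equiv 0,2,3$, $k\equiv 0$, $k\equiv 2$, $k\equiv 3\pmod p$ and in each class exhibit two surviving monomials with exponents coprime to $p$, forcing $p\mid d$. The only (harmless) organizational difference is that you start the uniform sweep at $k\ge 7$ and absorb the single edge case $(k,d,p)=(8,1,5)$ into it, whereas the paper treats $k=7,8$ individually and begins the sweep at $k\ge 9$.
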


\begin{proof}
It is enough to prove only the sufficient part of the theorem. Note that the polynomials $\mathfrak D_{1,3}=X^d$, $\mathfrak D_{2,3}=X^{2d}$ and $\mathfrak D_{3,3}=X^{3d}$ will be DO polynomial only if $d$ is of the form $p^{\alpha}+1$, $(p^{\alpha}+1)/2$ and $(p^{\alpha}+1)/3$, respectively. If the polynomial $\mathfrak D_{4,3}=X^{4d}-aX^{2d}$ is a DO polynomial then $2d = p^{\alpha}+1$ and $4d = p^{\beta}+1$. Hence, $2p^{\alpha}+1 = p^{\beta}$, which forces $\alpha = 0$ and $p^{\beta}=3$. This would lead to a contradiction as $p>3$. Therefore, $\mathfrak D_{4,3}$ is not DO. The polynomial $\mathfrak D_{5,3}=X^{5d}-2aX^{3d}-a^2X^{d}$ will be a DO polynomial only if $d = p^{\alpha}+1$, $3d = p^{\beta}+1$ and $5d = p^s(p^{\gamma}+1)$. Combining first two equations, we have $3p^{\alpha}+2 = p^{\beta}$, which is true if and only if $\alpha = 0$, $\beta=1$, $p=5$ and $d=2$. Now putting these values into third equation, we have $10=5^s(5^{\gamma}+1)$, which gives $s=1$ and $\gamma=0$. If the polynomial $\mathfrak D_{6,3}=X^{6d}-3aX^{4d}$ is  DO then $4d = p^{\alpha}+1$ and $6d = p^{\beta}+1$. Combining these two equations, we have $3p^{\alpha}+1 = 2p^{\beta}$ which forces $\alpha = 0$ and $p^{\beta}=2$. This gives a contradiction. Therefore, $\mathfrak D_{6,3}$ is not DO. The polynomial $\mathfrak D_{7,3}=X^{7d}-4aX^{5d}+2a^2X^{3d}+2a^3X^{d}$ will be a DO polynomial only if $d = p^{\alpha}+1$, $3d = p^{\beta}+1$, $5d = p^s(p^{\gamma}+1)$ and $7d = p^t(p^{\delta}+1)$. Combining first two equations, we have $3p^{\alpha}+2 = p^{\beta}$, which is true if and only if $\alpha = 0$, $\beta=1$, $d=2$ and $p=5$. Now putting this into fourth equation, we get  $t=0$ and $5^{\delta}=13$, which is a contradiction. Therefore, $\mathfrak D_{7,3}$ is not DO. For the polynomial $\mathfrak D_{8,3}=X^{8d}-5aX^{6d}+5a^2X^{4d}+2a^3X^{2d}$, we shall consider two cases, namely, $p=5$ and $p>5$. In the case $p=5$, coefficients of $X^{2d}$ and $X^{8d}$ are nonzero. Thus, if $\mathfrak D_{8,3}$ is a DO polynomial then $2d = 5^{\alpha}+1$ and $8d = 5^{\beta}+1$. Combining first two equations, we have $4\cdot 5^{\alpha}+3 = 5^{\beta}$, which forces $\alpha = 0$ and $5^{\beta}=7$. This leads to a contradiction. Therefore, $\mathfrak D_{8,3}$ is not a DO polynomial in this case. For the case $p>5$, the coefficients of $X^{2d}$, $X^{4d}$, $X^{6d}$ and $X^{8d}$ in $\mathfrak D_{8,3}$ are nonzero. Thus, $\mathfrak D_{8,3}$ will be a DO polynomial only if $2d = p^{\alpha}+1$, $4d = p^{\beta}+1$, $6d = p^{\gamma}+1$ and $8d = p^{\delta}+1$. Combining first two equations, we have $2p^{\alpha}+1 = p^{\beta}$, which forces $\alpha = 0$ and $p^{\beta}=3$. Thus, we arrive at a contradiction. Therefore, $\mathfrak D_{8,3}$ is not DO. For $k\geq 9$, we proceed by doing the following cases.

\textbf{Case 1} Let $k \not\equiv 0,2,3~(\mbox{mod}~p)$. In this case, the coefficients of $X^{kd}$ and $X^{(k-2)d}$ are nonzero. Therefore, if $\mathfrak D_{k,3}$ is a DO polynomial then $kd = p^{\alpha}+1$ and $(k-2)d = p^{\beta}+1$. Combining these two equations, we have $2d = p^{\alpha} - p^{\beta}$. Since $k \geq 9$, we must have $\alpha, \beta \geq 1$, which implies that $p \mid d$. This leads to a contradiction.

\textbf{Case 2 }Let $k \equiv 0~(\mbox{mod}~p)$. In this case, $k \not\equiv 2,3,4,6~(\mbox{mod}~p)$ and hence, the coefficients of $X^{(k-2)d}$ and $X^{(k-4)d}$ are nonzero. Thus, $\mathfrak D_{k,3}$ will be a DO polynomial only if $(k-2)d = p^{\alpha}+1$ and $(k-4)d = p^{\beta}+1$. Combining these two equations, we have $2d = p^{\alpha}-p^{\beta}$. Since $k \geq 9$, we have $\alpha, \beta \geq 1$, which implies that $p\mid d$. This gives a contradiction.

\textbf{Case 3} Let $k \equiv 2~(\mbox{mod}~p)$. In this case, $k \not\equiv 0,3,4,6~(\mbox{mod}~p)$ and hence, the coefficients of $X^{kd}$ and $X^{(k-4)d}$ are nonzero. Therefore, if $\mathfrak D_{k,3}$ is a DO polynomial then $kd = p^{\alpha}+1$ and $(k-4)d = p^{\beta}+1$. Combining these two equations, we have $4d = p^{\alpha}-p^{\beta}$. Since $k \geq 9$, we must have $\alpha, \beta \geq 1$, which implies that $p \mid d$. Thus, we arrive at a contradiction.

\textbf{Case 4} Let $k \equiv 3~(\mbox{mod}~p)$. In this case $k \not\equiv 0,4,5,6,9~(\mbox{mod}~p)$ and hence the coefficients of $X^{kd}$ and $X^{(k-6)d}$ are nonzero, therefore $\mathfrak D_{k,3}$ will be DO polynomial only if $kd = p^{\alpha}+1$ and $(k-6)d = p^{\beta}+1$. Combining these two equations, we have $6d = p^{\alpha}-p^{\beta}$. Since $k \geq 9$, therefore $\alpha, \beta \geq 1$ which implies that $p \mid d$, a contradiction.
\end{proof}

\section{DO polynomials from the polynomial $\mathfrak{D}_{k,4}$ }\label{S4}

Recall that $\mathfrak{D}_{k,4} = D_{k,4}(X^d,a)- D_{k,4}(0,a)$ whose expression is given by
\begin{equation*}
 \mathfrak D_{k,4} =\sum_{i=0}^{\lfloor \frac{k-1}{2} \rfloor} \frac{k-4i}{k-i} {k-i \choose i} (-a)^i X^{(k-2i)d}
\end{equation*}
i.e.,
\begin{equation}\label{hk}
 \mathfrak D_{k,4} =X^{kd}-(k-4)aX^{(k-2)d}+\frac{(k-3)(k-8)}{2!}a^2X^{(k-4)d}- \cdots.
\end{equation} 
Since, for $p=3$, $\mathfrak D_{k,4} = \mathfrak D_{k,1}$. Thus, for $p=3$, $\mathfrak D_{k,4}$ will be DO polynomial whenever $\mathfrak D_{k,1}$ is DO. As a consequence, the case $p=3$ follows immediately from \cite[Theorem 3.1]{CM}. For $p\geq5$, we prove the following theorem.

\begin{thm} Let $q$ be a power of odd prime $p\geq5$. The polynomial $\mathfrak D_{k,4}$ is DO polynomial over $\mathbb{F}_q$ if and only if one of the following holds.
\begin{enumerate}[(i)]
   \item $k=1$ and $d= p^n(p^{\alpha}+1)$ for non-negative integers $\alpha$ and $n$.
   \item $k=2$ and $d= p^n(p^{\alpha}+1)/2$ for non-negative integers $\alpha$ and $n$.
   \item $k=4$ and $d= p^n(p^{\alpha}+1)/4$ for non-negative integers $\alpha$ and $n$.
   \item $k=3,5$, $p=5$ and $d= 2p^n$ for non-negative integer $n$.
\end{enumerate}
\end{thm}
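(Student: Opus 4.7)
The sufficiency of the listed conditions is a direct verification: in each case one substitutes the prescribed values of $k$ and $d$, reads off the surviving monomials of $\mathfrak{D}_{k,4}$, and checks that every exponent has the form $p^\beta(p^\alpha+1)$. The real work is in the necessity direction, where the plan is to mirror the structure of the proofs for $\mathfrak D_{k,2}$ and $\mathfrak D_{k,3}$.

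First I would dispatch the small values $k=1,2,\dots,8$ one at a time from the expansion
\[
\mathfrak D_{k,4}=X^{kd}-(k-4)aX^{(k-2)d}+\frac{(k-3)(k-8)}{2!}a^2X^{(k-4)d}-\frac{(k-4)(k-5)(k-12)}{3!}a^3X^{(k-6)d}+\cdots
\]
Each case reduces to a Diophantine system of the form $(k-2i)d=p^{\alpha_i}+1$ (or $p^{s_i}(p^{\alpha_i}+1)$ when $p\mid k-2i$), and a typical combination yields $2d=p^\alpha-p^\beta$ or $2p^\alpha+1=p^\beta$; for $p\geq 5$ and $\gcd(d,p)=1$ almost all such systems are infeasible. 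The exceptions produce exactly items (i)--(iv): $\mathfrak D_{4,4}$ collapses to $X^{4d}$ because $k-4=0$ kills every subsequent coefficient, giving (iii); and for $k\in\{3,5\}$ the system $d=p^\alpha+1$, $3d=p^\beta+1$ forces $3p^\alpha+2=p^\beta$, whose unique solution is $\alpha=0$, $\beta=1$, $p=5$, $d=2$, producing (iv).

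For $k\geq 9$ I would split into four cases according to $k\bmod p$ and, in each, identify two indices $i_1<i_2$ such that the coefficient of $X^{(k-2i_j)d}$ is nonzero modulo $p$ and $p\nmid k-2i_j$, so that $(k-2i_j)d=p^{\alpha_j}+1$. Subtracting gives $2(i_2-i_1)d=p^{\alpha_1}-p^{\alpha_2}$, and the hypotheses $k\geq 9$ and $p\geq 5$ force $\alpha_1,\alpha_2\geq 1$, hence $p\mid d$, contradicting $\gcd(d,p)=1$. The generic case $k\not\equiv 0,2,4\pmod p$ uses $(i_1,i_2)=(0,1)$; the case $k\equiv 0\pmod p$ uses $(1,2)$, whose coefficients survive because $p\geq 5$ keeps $k-3,k-4,k-8$ nonzero modulo $p$; the case $k\equiv 2\pmod p$ uses $(0,2)$, yielding $4d=p^\alpha-p^\beta$.

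The main obstacle is the case $k\equiv 4\pmod p$, where the factor $(k-4)$ kills the coefficients of both $X^{(k-2)d}$ and $X^{(k-6)d}$. One must jump ahead to $X^{(k-8)d}$, whose coefficient $\frac{(k-5)(k-6)(k-7)(k-16)}{4!}a^4$ reduces modulo $p$ to a nonzero multiple of $(-1)(-2)(-3)(-12)$; the hypothesis $p\geq 5$ is precisely what guarantees none of these four factors vanishes (the critical factor is $-12$, which forces $p\nmid 12$). Pairing $X^{kd}$ with $X^{(k-8)d}$ then gives $8d=p^\alpha-p^\gamma$, and when $\gamma\geq 1$ one concludes $p\mid d$. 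The degenerate subcase $\gamma=0$ forces $(k-8)d=2$, so $(k,d)\in\{(9,2),(10,1)\}$: for $(10,1)$ the Case 4 congruence $k\equiv 4\pmod p$ requires $p\mid 6$, contradicting $p\geq 5$; for $(9,2)$ one has $p=5$, but then $kd=18=5^\alpha+1$ has no solution in $\alpha$.
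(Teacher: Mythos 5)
Your proposal is correct and follows essentially the same route as the paper: individual Diophantine analysis of $\mathfrak D_{k,4}$ for small $k$, then a case split on $k \bmod p$ that pairs two terms with coefficients nonzero modulo $p$ to force $p \mid d$, with the fifth term $\frac{(k-5)(k-6)(k-7)(k-16)}{4!}a^4X^{(k-8)d}$ rescuing the case $k\equiv 4\pmod p$. The only cosmetic difference is that you begin the general argument at $k\geq 9$ and absorb $k=9,10$ through the degenerate subcase $\gamma=0$ of your Case 4, whereas the paper treats $k=9$ and $k=10$ as explicit small cases and starts the general analysis at $k\geq 11$.
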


\begin{proof}
The necessary part of the theorem is obvious. Thus, we shall focus only on the sufficient part. It may be noted that $\mathfrak D_{1,4}=X^{d}$, $\mathfrak D_{2,4}=X^{2d}$ and $\mathfrak D_{4,4}=X^{4d}$ will be a DO polynomial only if $d$ is of the form $p^{\alpha}+1$, $(p^{\alpha}+1)/2$ and $(p^{\alpha}+1)/4$, respectively. If the polynomial $\mathfrak D_{3,4}=X^{3d}+aX^{d}$ is a DO polynomial then $d = p^{\alpha}+1$ and $3d = p^{\beta}+1$. Combining these two equations, we have $3p^{\alpha}+2 = p^{\beta}$, which is true if and only if $\alpha = 0$, $\beta=1$, $p=5$ and $d=2$. Again, if the polynomial $\mathfrak D_{5,4}=X^{5d}-aX^{3d}-3a^2X^{d}$ is a DO polynomial then $d = p^{\alpha}+1$ , $3d = p^{\beta}+1$ and $5d = p^s(p^{\gamma}+1)$. Combining first two equations, we have $3p^{\alpha}+2 = p^{\beta}$, which is true if and only if $\alpha = 0$, $\beta=1$, $p=5$ and $d=2$. Now putting these values into third equation, we have $10=5^s(5^{\gamma}+1)$, which gives $s=1$ and $\gamma=0$. The polynomial $\mathfrak D_{6,4}=X^{6d}-2aX^{4d}-3a^2X^{2d}$ will be a DO polynomial only if $2d = p^{\alpha}+1$, $4d = p^{\beta}+1$ and $6d = p^{\gamma}+1$. Combining first two equations, we get $2p^{\alpha}+1 = p^{\beta}$, which forces $\alpha = 0$ and $p^{\beta}=3$ and this leads to a contradiction. Therefore, $\mathfrak D_{6,4}$ is not DO. Further, if the polynomial $\mathfrak D_{7,4}=X^{7d}-3aX^{5d}-2a^2X^{3d}+5a^3X^{d}$ is a DO polynomial then $d = p^{\alpha}+1$, $3d = p^{\beta}+1$, $5d = p^s(p^{\gamma}+1)$ and $7d = p^t(p^{\delta}+1)$. Combining first two equations, we have $3p^{\alpha}+2 = p^{\beta}$ which forces $\alpha = 0$, $\beta=1$, $d=2$ and $p=5$. Now putting these values into fourth equation, we have $t=0$ and $5^{\delta}=13$ which is a contradiction. Again, if the polynomial $\mathfrak D_{8,4}=X^{8d}-4aX^{6d}+8a^3X^{2d}$ is a DO polynomial then $2d = p^{\alpha}+1$, $6d=p^{\beta}+1$ and $8d = p^{\gamma}+1$. Combining first two equations, we have $3p^{\alpha}+2 = p^{\beta}$ which is true if and only if $\alpha = 0$, $\beta=1$, $d=1$ and $p=5$. Now putting these values into third equation we have $5^{\gamma}=7$, which is a contradiction. For the polynomial $\mathfrak D_{9,4}=X^{9d}-5aX^{7d}+3a^2X^{5d}+10a^3X^{3d}-7a^4X^{d}$, we shall consider two cases, namely, $p=5$ and $p>5$. For the case $p=5$, the coefficients of $X^{d}$, $X^{5d}$ and $X^{9d}$ in $\mathfrak D_{9,4}$ are nonzero. Thus, if $\mathfrak D_{9,4}$ is a DO polynomial then $d = 5^{\alpha}+1$, $5d = 5^s(5^{\beta}+1)$ and $9d = 5^{\gamma}+1$. Since $5^s \mid 5$, we have $s=1$ and hence the second equation reduces to $d=5^{\beta}+1$. Combining first and third equations, we have $9p^{\alpha}+8 = p^{\beta}$, which forces $\alpha = 0$, $p^{\gamma}=17$ and this gives a contradiction. For the case $p>5$, the coefficients of $X^{3d}$, $X^{5d}$ and $X^{9d}$ are nonzero. Thus, $\mathfrak D_{9,4}$ will be a DO polynomial only if $3d = p^{\alpha}+1$, $5d = p^{\beta}+1$, and $9d = p^{\gamma}+1$. Combining first two equations, we have $5p^{\alpha}+2 = 3p^{\beta}$, which forces $\alpha = 0$ and $3p^{\beta}=7$. This gives a contradiction. Similarly, for the polynomial $\mathfrak D_{10,4}=X^{10d}-6aX^{8d}+7a^2X^{6d}+10a^3X^{4d}-15a^4X^{2d}$, we shall consider two cases, namely, $p=5$ and $p>5$. For the case $p=5$, the coefficients of $X^{6d}$, $X^{8d}$ and $X^{10d}$ are nonzero. Thus if $\mathfrak D_{10,4}$ is a DO polynomial then $6d = 5^{\alpha}+1$, $8d=5^{\beta}+1$ and $10d=5^s(5^{\gamma}+1)$. Combining first two equations, we have $4\cdot5^{\alpha}+1 = 3\cdot5^{\beta}$, which forces $\alpha = 0$ and $3\cdot5^{\beta}=5$ and this leads to a contradiction. If $p>5$ then the coefficients of $X^{2d}$ and $X^{4d}$ are nonzero. Therefore, $\mathfrak D_{10,4}$ will be a DO polynomial only if $2d = p^{\alpha}+1$ and $4d=p^{\beta}+1$. Combining these two equations, we have $2p^{\alpha}+1 = p^{\beta}$, which forces $\alpha = 0$ and $p^{\beta}=3$. This gives a contradiction and therefore, $\mathfrak D_{10,4}$ is not DO. For $k\geq 11$, we proceed by considering various cases.

\textbf{Case 1} Let $k \not\equiv 0,2,4~(\mbox{mod}~p)$. In this case, the coefficients of $X^{kd}$ and $X^{(k-2)d}$ in $\mathfrak D_{k,4}$ are nonzero. Therefore, if $\mathfrak D_{k,4}$ is a DO polynomial then $kd = p^{\alpha}+1$ and $(k-2)d = p^{\beta}+1$. Combining these two equations, we have $2d = p^{\alpha} - p^{\beta}$. Since $k \geq 11$, we have $\alpha, \beta \geq 1$, which implies that $p \mid d$. This leads to a contradiction. Therefore, $\mathfrak D_{k,4}$ is not DO.

\textbf{Case 2} Let $k \equiv 0~(\mbox{mod}~p)$. In this case, $k \not\equiv 2,3,4,8~(\mbox{mod}~p)$ and hence, the coefficients of $X^{(k-2)d}$ and $X^{(k-4)d}$ in $\mathfrak D_{k,4}$ are nonzero. Therefore, $\mathfrak D_{k,4}$ will be DO polynomial only if $(k-2)d = p^{\alpha}+1$ and $(k-4)d = p^{\beta}+1$. Combining these two equations, we have $2d = p^{\alpha}-p^{\beta}$. Since $k \geq 11$, we must have $\alpha, \beta \geq 1$, which implies that $p \mid d$. This gives a contradiction, therefore, $\mathfrak D_{k,4}$ is not DO.

\textbf{Case 3} Let $k \equiv 2~(\mbox{mod}~p)$. In this case, $k \not\equiv 0,3,4,8~(\mbox{mod}~p)$ and hence, the coefficients of $X^{kd}$ and $X^{(k-4)d}$ in $\mathfrak D_{k,4}$ are nonzero. Therefore, if $\mathfrak D_{k,4}$ is a DO polynomial then $kd = p^{\alpha}+1$ and $(k-4)d = p^{\beta}+1$. Combining these two equations, we have $4d = p^{\alpha}-p^{\beta}$. Since $k \geq 11$, we have $\alpha, \beta \geq 1$ which implies that $p \mid d$. This leads to a contradiction, therefore, $\mathfrak D_{k,4}$ is not DO.

\textbf{Case 4} Let $k \equiv 4~(\mbox{mod}~p)$. Consider the fifth term in $\mathfrak D_{k,4}$ given by $$ \frac{(k-5)(k-6)(k-7)(k-16)}{4!}a^4X^{(k-8)d}. $$ Since, $k \not\equiv 0,5,6,7,8,16~(\mbox{mod}~p)$ and hence, the coefficients of $X^{kd}$ and $X^{(k-8)d}$ in $\mathfrak D_{k,4}$ are nonzero. Therefore, if $\mathfrak D_{k,4}$ is a DO polynomial, then $kd = p^{\alpha}+1$ and $(k-8)d = p^{\beta}+1$. Combining these two equations, we have $8d = p^{\alpha}-p^{\beta}$. Since $k \geq 11$, we have $\alpha, \beta \geq 1$ which implies that $p \mid d$. Which is a contradiction, hence $\mathfrak D_{k,4}$ is not DO.
\end{proof}

\section{The case $m \geq 5$ }\label{S5}
As alluded to in Introduction, we denote the polynomial $D_{k,m}(X^d,a)-D_{k,m}(0,a)$ by $\mathfrak D_{k,m}$, whose expression is given by
\begin{equation*}
 \mathfrak D_{k,m} =\sum_{i=0}^{\lfloor \frac{k-1}{2} \rfloor} \frac{k-mi}{k-i} {k-i \choose i} (-a)^i X^{(k-2i)d}
\end{equation*}
i.e.,
\begin{equation}\label{dkm}
 \mathfrak D_{k,m}=X^{kd}-(k-m)aX^{(k-2)d}+\frac{(k-3)(k-2m)}{2!}a^2X^{(k-4)d}-\cdots.
\end{equation}

In this section, we find conditions on $k$, $m$, $p$ and $d$ for which the polynomial $\mathfrak D_{k,m}$ is DO polynomial. For the case $p=3$, we have either $m\equiv0 ~(\mbox{mod}~3)$ or $m\equiv1 ~(\mbox{mod}~3)$ or $m\equiv2~(\mbox{mod}~3)$. Therefore, in these cases, the polynomial $\mathfrak D_{k,m}$ will be a DO polynomial whenever $\mathfrak D_{k,0}$, $\mathfrak D_{k,1}$ and $\mathfrak D_{k,2}$ are DO, respectively. Thus, for the case $p=3$, the classification follows from \cite[Theorem 2.1 and 3.1]{CM} and from the Section \ref{S2} of this paper, respectively. Similarly, for $p=5$, we have either $m\equiv0~(\mbox{mod}~5)$ or $m\equiv1~(\mbox{mod}~5)$ or $m\equiv2~(\mbox{mod}~5)$ or $m\equiv3~(\mbox{mod}~5)$ or $m\equiv4~(\mbox{mod}~5)$. In these cases, the polynomial $\mathfrak D_{k,m}$ will be a DO polynomial whenever $\mathfrak D_{k,0}$, $\mathfrak D_{k,1}$, $\mathfrak D_{k,2}$, $\mathfrak D_{k,3}$ and $\mathfrak D_{k,4}$ are DO, respectively. In the case $p>5$, the cases $m\equiv 0,1,2,3,4~(\mbox{mod}~p)$ are already classified in \cite[Theorem 2.1 and 3.1]{CM} and in Section \ref{S2}, \ref{S3} and \ref{S4} of this paper, respectively. For the case $p>5$ and $m\not \equiv 0,1,2,3,4 ~(\mbox{mod}~p)$, we prove the following theorem.

\begin{thm} Let $q$ be a power of odd prime $p>5$ and 
$m\not\equiv0,1,2,3,4 \pmod p$. The polynomial 
$\mathfrak D_{k,m}$ is DO polynomial over $\mathbb{F}_q$ if and only if one of the following holds.
\begin{enumerate}[(i)]
   \item $k=1$ and $d= p^n(p^{\alpha}+1)$ for non-negative integers $\alpha$ and $n$.
   \item $k=2$ and $d= p^n(p^{\alpha}+1)/2$ for non-negative integers $\alpha$ and $n$.
\end{enumerate}
\end{thm}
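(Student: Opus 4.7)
The forward direction is immediate: for $k=1,2$ the polynomial $\mathfrak D_{k,m}$ collapses to $X^d$ or $X^{2d}$, both of which are DO exactly when the exponent has the advertised shape $p^n(p^\alpha+1)$. The substance of the theorem lies in the converse, where I must show that under the hypotheses $p>5$ and $m\not\equiv 0,1,2,3,4\pmod p$, no $\mathfrak D_{k,m}$ with $k\geq 3$ is DO. Throughout I would write $c_i = \frac{k-mi}{k-i}\binom{k-i}{i}$ for the coefficient of $X^{(k-2i)d}$ in $\mathfrak D_{k,m}$, so $c_0=1$, $c_1=k-m$, $c_2=(k-2m)(k-3)/2$.

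For $k=3,4$ I would argue directly: $\mathfrak D_{3,m}=X^{3d}-(3-m)aX^d$ and $\mathfrak D_{4,m}=X^{4d}-(4-m)aX^{2d}$, and $m\not\equiv 3,4\pmod p$ makes both sub-leading coefficients nonzero. The DO condition then yields pairs $kd=p^\alpha+1$ and $(k-2)d=p^\beta+1$ (using $p\nmid k,k-2$ since $p>5$), which collapse to $3p^\beta+2=p^\alpha$ or $2p^\beta+1=p^\alpha$; these are incompatible with $p>5$, just as in Sections \ref{S2}--\ref{S4}.

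For $k\geq 5$, I would split by the residue of $k$ modulo $p$ and, in each scenario, locate two nonzero coefficients $c_i,c_j$ whose exponents $k-2i,k-2j$ are coprime to $p$. If $k\not\equiv 0,m\pmod p$, take $(c_0,c_1)$ when $k\not\equiv 2\pmod p$; otherwise $k\equiv 2\pmod p$ forces $k\geq p+2\geq 9$, and I would use $(c_0,c_2)$ (with $c_2\neq 0$ because $m\not\equiv 1\pmod p$ and $p>3$). If $k\equiv m\pmod p$, then $c_1=0$ but $c_2$ survives (using $m\not\equiv 0,3\pmod p$), and pairing $(c_0,c_2)$ works because $m\not\equiv 0,4\pmod p$ forces $p\nmid k,k-4$. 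Finally, if $k\equiv 0\pmod p$, then $c_1,c_2$ are both nonzero (from $m\not\equiv 0\pmod p$ and $p>3$), and $p\nmid k-2,k-4$, so I would pair $(c_1,c_2)$. In each scenario the two DO equations $(k-2i)d=p^{\alpha_i}+1$ subtract to give $2d=p^{\alpha_i}-p^{\alpha_j}$ or $4d=p^{\alpha_i}-p^{\alpha_j}$, and $\gcd(d,p)=1$ with $p>5$ leaves no room: either both exponents are $\geq 1$ (forcing $p\mid 2d$ or $p\mid 4d$, impossible), or one is $0$ (forcing $(k-2i)d=2$, incompatible with $k\geq 5$ after a small hand-check of $k=5,6$ in the sub-case $k\equiv m\pmod p$, corresponding to $m=5$ or $m=6$).

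The main obstacle is the bookkeeping inside the case analysis, namely verifying in every sub-case that the chosen coefficients are nonzero in $\mathbb F_q$; this is where the hypothesis $m\not\equiv 0,1,2,3,4\pmod p$ is used most crucially. The borderline values $k=5,6$ require individual attention because the generic bound is $k\geq p+2$ in the $k\equiv 2\pmod p$ sub-case and $k\geq p$ in the $k\equiv 0\pmod p$ case, so the three residue classes do not automatically reach down to $k=5,6$; however, these small cases fall to the very same equation-comparison method already employed in Sections \ref{S2}--\ref{S4}.
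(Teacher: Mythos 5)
Your proposal is correct and follows essentially the same strategy as the paper: in each residue class of $k$ modulo $p$ one locates two nonzero coefficients whose exponents are prime to $p$, subtracts the resulting equations $(k-2i)d=p^{\alpha_i}+1$ to force $p\mid d$, and disposes of the borderline cases by the small Diophantine checks $3p^{\beta}+2=p^{\alpha}$, $2p^{\beta}+1=p^{\alpha}$, $p^{\alpha}=9$, etc. The only difference is organizational: the paper treats $k=3,4,5,6$ individually before running the four-case residue analysis for $k\geq 7$, whereas you fold $k=5,6$ into the general analysis with a hand-check in the $k\equiv m\pmod p$ sub-case; the pairings of coefficients in each case coincide with the paper's.
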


\begin{proof} 

Since necessary part of the theorem is clear, we shall prove only the sufficient part. Notice that $\mathfrak D_{1,m}=X^d$ and $\mathfrak D_{2,m}=X^{2d}$ will be  DO polynomial only if $d$ is of the form $p^{\alpha}+1$ and $(p^{\alpha}+1)/2$, respectively. For the polynomial $\mathfrak D_{3,m}=X^{3d}+(m-3)aX^d$, the coefficient of $X^{d}$ would be nonzero. Otherwise, $m\equiv 3 \pmod p$, which will lead to a contradiction to fact that $m\not\equiv 3 \pmod p$. Therefore, if $\mathfrak D_{3,m}$ is a DO  polynomial then $d = p^{\alpha}+1$ and $3d = p^{\beta}+1$. Combining these two equations, we have $3p^{\alpha}+2 = p^{\beta}$, which forces $\alpha = 0$ and $p^{\beta}=5$. This gives a contradiction and therefore, $\mathfrak D_{3,m}$ is not DO. For the polynomial $\mathfrak D_{4,m}=X^{4d}+(m-4)aX^{2d}$, the coefficient of $X^{2d}$ is nonzero. Therefore, $\mathfrak D_{4,m}$ will be DO only if $2d = p^{\alpha}+1$ and $4d = p^{\beta}+1$. Combining these two equations, we have $2p^{\alpha}+1 = p^{\beta}$, which forces $\alpha = 0$ and $p^{\beta}=3$. This leads to a contradiction. Therefore, $\mathfrak D_{4,m}$ is not DO. Note that in the polynomial $\mathfrak D_{5,m}=X^{5d}+(m-5)aX^{3d}+(5-2m)a^2X^{d}$, the coefficients of  $X^d$ and $X^{3d}$ can not be simultaneously zero. Otherwise, $2m\equiv 5 \pmod p$ and $m\equiv 5 \pmod p$ would imply that $m\equiv0 \pmod p$, which is a contradiction as $m\not\equiv 0 \pmod p$. In the case, when $m\equiv 5 \pmod p$, if $\mathfrak D_{5,m}$ is a DO polynomial then $d = p^{\alpha}+1$ and $5d = p^{\beta}+1$. Combining these two equations, we have $5p^{\alpha}+4 = p^{\beta}$, which forces $\alpha = 0$ and $p^{\beta}=9$. This gives a contradiction. Therefore, $\mathfrak D_{5,m}$ is not DO. Further, in the case, when $m \not\equiv 5~(\mbox{mod}~p)$, if $\mathfrak D_{5,m}$ is a DO polynomial then $3d = p^{\alpha}+1$ and $5d = p^{\beta}+1$. Combining these two equations, we obtain $5p^{\alpha}+2 = 3p^{\beta}$, which forces $\alpha = 0$ and $3p^{\beta}=7$. This leads to a contradiction and therefore, $\mathfrak D_{5,m}$ is not DO. For the polynomial $\mathfrak D_{6,m}= X^{6d}+(m-6)aX^{4d}+3(3-m)a^2X^{2d}$, the coefficient of  $X^{2d}$ would be nonzero. Otherwise, $m\equiv3 \pmod p$, which is a contradiction. Therefore, if $\mathfrak D_{6,m}$ is a DO polynomial, then $2d = p^{\alpha}+1$ and $6d = p^{\beta}+1$. Combining these two equations, we get $3p^{\alpha}+2 = p^{\beta}$, which forces $\alpha = 0$ and $p^{\beta}=5$ and we arrive at a contradiction. Therefore, $\mathfrak D_{6,m}$ is not DO. Now for $k \geq 7$, we have following cases.

\textbf{Case 1} Let $k \not\equiv 0,2,m~(\mbox{mod}~p)$. In this case, the coefficients of $X^{kd}$ and $X^{(k-2)d}$ are nonzero. Therefore, if $\mathfrak D_{k,m}$ is a DO polynomial, then $kd = p^{\alpha}+1$ and $(k-2)d = p^{\beta}+1$. Combining these two equations, we obtain $2d =p^{\alpha}-p^{\beta}$. Note that we must have for $k\geq7$, $\alpha, \beta \geq 1$, which forces $p \mid d$. Thus, we arrive at a contradiction. Therefore, $\mathfrak D_{k,m}$ is not DO.

\textbf{Case 2} Let $k\equiv0~(\mbox{mod}~p)$. In this case, $k\not\equiv 2,3,4~(\mbox{mod}~p)$. Also, we observe that $k \not\equiv m~(\mbox{mod}~p)$, otherwise, $m\equiv0~(\mbox{mod}~p)$, which is a contradiction. Similarly, $k \not\equiv 2m~(\mbox{mod}~p)$, otherwise $m\equiv0~(\mbox{mod}~p)$, which is again a contradiction. Therefore, in this case, the coefficients of $X^{(k-2)d}$ and $X^{(k-4)d}$ are nonzero. Thus, if $\mathfrak D_{k,m}$ is a DO polynomial, then $(k-2)d = p^{\alpha}+1$ and $(k-4)d = p^{\beta}+1$. Combining these two equations, we have $2d =p^{\alpha}-p^{\beta}$. For $k\geq7$, we must have $\alpha, \beta \geq 1$, which forces $p \mid d$. This leads to a contradiction. Therefore, $\mathfrak D_{k,m}$ is not DO.

\textbf{Case 3} Let $k\equiv2~(\mbox{mod}~p)$. In this case, $k\not\equiv 0,3~(\mbox{mod}~p)$. Also, we observe that $k \not\equiv 2m~(\mbox{mod}~p)$. Otherwise $m\equiv1~(\mbox{mod}~p)$, which would lead to a contradiction. In this case, the coefficients of $X^{kd}$ and $X^{(k-4)d}$ in $\mathfrak D_{k,m}$ are nonzero. Therefore, $\mathfrak D_{k,m}$ will be a DO polynomial only if $kd = p^{\alpha}+1$ and $(k-4)d = p^{\beta}+1$. Combining these two equations, we have $4d =p^{\alpha}-p^{\beta}$. Now for $k\geq7$, we must have $\alpha, \beta \geq 1$, which forces $p \mid d$. Thus, we arrive at a contradiction, Hence, $\mathfrak D_{k,m}$ is not DO.

\textbf{Case 4} Let $k \equiv m~(\mbox{mod}~p)$. In this case, $k \not\equiv 0,3,4~(\mbox{mod}~p)$. Also, we observe that $k \not\equiv 2m~(\mbox{mod}~p)$. Otherwise $m\equiv0~(\mbox{mod}~p)$, which would lead to a contradiction. In this case, the coefficients of $X^{kd}$ and $X^{(k-4)d}$ in $\mathfrak D_{k,m}$ are nonzero. Therefore, if $\mathfrak D_{k,m}$ is a DO polynomial, then $kd = p^{\alpha}+1$ and $(k-4)d = p^{\beta}+1$. Combining these two equations, we have $4d =p^{\alpha}-p^{\beta}$. For $k\geq7$, we have $\alpha, \beta \geq 1$, which forces $p \mid d$. This gives a contradiction. Therefore, $\mathfrak D_{k,m}$ is not DO.
\end{proof}

\section{Discussion on planarity}\label{S6}

We shall first discuss some tools and techniques, also described in \cite{FHP}, required to handle planarity of the DO polynomials obtained in the previous sections and listed in Appendix~\ref{A1}. As defined earlier, a function $f$ is planar if the difference function $\Delta_f(X, \epsilon) =f(X+\epsilon)-f(X)-f(\epsilon)$ is a permutation of $\f_{p^e}$ for all $\epsilon \in \mathbb{F}_{p^e}^*$. When $f$ is a DO polynomial (which is what we shall always assume in our discussion below), $\Delta_f (X, \epsilon)$ belongs to another well-known class of polynomials called linearized polynomial, for each $\epsilon \in \mathbb{F}_{p^e}^*$. Therefore $f$ is planar if and only if linearized polynomial $\Delta_f (X,\epsilon)$ is permutation polynomial for all $\epsilon \in \mathbb{F}_{p^e}^*$. The permutation behaviour of linearized polynomial is well-known. In fact, \cite[Theorem 7.9]{BOOK} tells us that a linearized polynomial is a permutation polynomial over $\mathbb{F}_{q}$ if and only if its only root in $\mathbb{F}_q$ is $0$. Thus, in order to show that $f$ is not planar, it is sufficient to exhibit that the difference function $\Delta_{f}(X,Y)= f(X+Y)-f(X)-f(Y)$ has a non-zero root $(X,Y)\in \mathbb{F}_{p^e}^* \times \mathbb{F}_{p^e}^*$. 

We recall that a function $f$ from $\mathbb{F}_q$ to itself is two-to-one (2-to-1) if for all but one $b \in \mathbb{F}_q$, it has either 2 or 0 preimages of $f$, and the exception element has exactly one preimage. For a DO polynomial $f$, Weng and Zeng  \cite{planar} gave the following necessary and sufficient condition for $f$ to be a planar function.
\begin{lem}\cite[Theorem 2.3]{planar} \label{l6.1}
Let $f$ be a DO polynomial over $\mathbb{F}_q$. Then $f$ is planar if and only if $f$ is 2-to-1.
\end{lem}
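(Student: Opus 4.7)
The plan is to exploit the biadditivity of $\Delta_f$ together with the evenness $f(-X)=f(X)$ that is automatic for DO polynomials in odd characteristic, and then to translate the kernel condition for the linearized polynomial $\Delta_f(X,\epsilon)$ into a statement about the fibres of $f$. For $f(X)=\sum a_{ij}X^{p^i+p^j}$, a direct expansion gives
\[
\Delta_f(X,Y)=\sum_{i,j} a_{ij}\bigl(X^{p^i}Y^{p^j}+X^{p^j}Y^{p^i}\bigr),
\]
which is additive in each variable separately and satisfies $\Delta_f(X,-Y)=-\Delta_f(X,Y)$ because $p$ is odd. Since $p^i+p^j$ is even, we also have $f(-X)=f(X)$. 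Combining these facts with the defining identity $f(X+Y)=f(X)+f(Y)+\Delta_f(X,Y)$ yields the crucial relation
\[
f(X+Y)-f(X-Y)=2\,\Delta_f(X,Y),
\]
so in odd characteristic $\Delta_f(X,Y)=0$ is equivalent to $f(X+Y)=f(X-Y)$.

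Next I would invoke \cite[Theorem 7.9]{BOOK}, which guarantees that a linearized polynomial permutes $\mathbb{F}_q$ precisely when its only root is $0$. Hence $f$ is planar if and only if every solution of $\Delta_f(X,Y)=0$ with $Y\ne 0$ forces $X=0$. Using the identity above and the change of variables $A=X+Y$, $B=X-Y$, which is a bijection on $\mathbb{F}_q^{\,2}$ in odd characteristic, this condition rewrites as: whenever $f(A)=f(B)$ with $A\ne B$, one must have $A=-B$. Equivalently, every non-empty fibre of $f$ has the form $\{A,-A\}$.

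Finally I would read off the $2$-to-$1$ property from this fibre description. The set $\{A,-A\}$ consists of two distinct elements when $A\ne 0$ and collapses to $\{0\}$ when $A=0$, so fibres of this shape produce exactly the distribution required: each value in the image of $f$ other than $0$ has exactly two preimages, while the exceptional singleton fibre is $f^{-1}(0)=\{0\}$. For the converse, the inclusion $\{A,-A\}\subseteq f^{-1}(f(A))$ is automatic from $f(-A)=f(A)$, so the $2$-to-$1$ hypothesis pins the fibres of $f$ down to exactly these pairs, which by the previous step is equivalent to planarity. The main delicate point will be matching the unique singleton fibre in the $2$-to-$1$ definition with $f^{-1}(0)$ rather than some other value: this is forced because every $f(A)$ with $A\ne 0$ already has the two distinct preimages $A,-A$, leaving $f^{-1}(f(0))=f^{-1}(0)$ as the only possible singleton fibre.
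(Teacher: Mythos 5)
Your argument is correct and complete. Note, however, that the paper does not prove this statement at all: it is quoted verbatim from Weng and Zeng \cite[Theorem 2.3]{planar}, so there is no in-paper proof to compare against. What you have written is a clean, self-contained reconstruction of the standard argument behind that citation. The key steps all check out: the expansion $\Delta_f(X,Y)=\sum_{i,j}a_{ij}(X^{p^i}Y^{p^j}+X^{p^j}Y^{p^i})$ shows $\Delta_f(\cdot,\epsilon)$ is linearized, the evenness $f(-X)=f(X)$ together with $\Delta_f(X,-Y)=-\Delta_f(X,Y)$ gives $f(X+Y)-f(X-Y)=2\Delta_f(X,Y)$, and the invertible substitution $A=X+Y$, $B=X-Y$ converts the trivial-kernel criterion of \cite[Theorem 7.9]{BOOK} into the statement that every nonzero fibre of $f$ equals $\{A,-A\}$. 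Your handling of the converse is also careful where it needs to be: since $\{A,-A\}\subseteq f^{-1}(f(A))$ always holds and $A\neq -A$ for $A\neq 0$ in odd characteristic, the $2$-to-$1$ hypothesis forces every such fibre to be exactly $\{A,-A\}$ and forces the unique singleton fibre to be $f^{-1}(0)=\{0\}$ (any nonzero element of $f^{-1}(0)$ would give that fibre at least three elements). The only gain from your version over the paper's citation is self-containedness; the only thing the citation buys is brevity.
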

Notice that DO polynomials are even function. Therefore from Lemma~\ref{l6.1}, it is straightforward to see that if a DO polynomial $f$ has a root $z\in \mathbb{F}_q^*$ then $f$ is not planar. Also, it is easy to see that if $f(X)$ is a DO polynomial then so is $f(X^{p^n})$. Since $X^{p^n}$ is a linearized permutation polynomial over $\mathbb{F}_{p^e}$, the cardinality of the image set of $f(X)$ and  $f(X^{p^n})$ on $\mathbb{F}_q^*$ is same. Hence if $f(X)$ is planar then $f(X^{p^n})$ is also planar. Thus in such a case, it would be sufficient to consider the planarity of $f(X)$. 

We shall use the following  version of Weil bound to calculate the number of roots of certain absolutely irreducible bivariate polynomials over finite fields. 
\begin{lem} \cite[Lemma 2.4]{JS} \label{WB}
Let $f(X,Y)$ be an absolutely irreducible polynomial in $\mathbb{F}_q[X, Y]$ of degree $d$ and
let $N_f$ be the number of zeros of $f$. Then 
$$q+1-(d-1)(d-2)\sqrt{q}-d \leq N_f \leq q+1+(d-1)(d-2)\sqrt{q}.$$
\end{lem}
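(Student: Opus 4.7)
The plan is to deduce this Weil-type bound from the classical Hasse--Weil theorem applied to the normalization of the projective closure of the plane curve $\{f=0\}$. First I would pass to the projective closure $\tilde{C} \subset \mathbb{P}^2_{\mathbb{F}_q}$; since $f$ is absolutely irreducible of degree $d$, so is $\tilde{C}$. Let $\pi : \tilde{C}^{\nu} \to \tilde{C}$ denote the normalization map, so that $\tilde{C}^{\nu}$ is a smooth, absolutely irreducible projective curve of some geometric genus $g$, and the standard bound on the genus of a plane curve (via adjunction or Pl\"{u}cker's formula with singularity correction) gives $g \leq (d-1)(d-2)/2$.

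Next I would apply the Hasse--Weil theorem to $\tilde{C}^{\nu}$, obtaining
\begin{equation*}
\bigl| \, |\tilde{C}^{\nu}(\mathbb{F}_q)| - (q+1) \, \bigr| \;\leq\; 2g\sqrt{q} \;\leq\; (d-1)(d-2)\sqrt{q}.
\end{equation*}
The remaining work is to compare $|\tilde{C}^{\nu}(\mathbb{F}_q)|$ with $N_f$, the number of affine $\mathbb{F}_q$-zeros of $f$. By B\'{e}zout's theorem the line at infinity meets $\tilde{C}$ in at most $d$ points, and removing their preimages under $\pi$ costs at most an additive $d$ in the count. At smooth affine points $\pi$ restricts to a bijection onto $\mathbb{F}_q$-points, while the effect of singular affine points is absorbed into the genus drop that is already built into $g \leq (d-1)(d-2)/2$. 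Putting these together yields
\begin{equation*}
N_f \;\geq\; |\tilde{C}^{\nu}(\mathbb{F}_q)| - d \;\geq\; q+1-(d-1)(d-2)\sqrt{q} - d,
\end{equation*}
and on the other side
\begin{equation*}
N_f \;\leq\; |\tilde{C}^{\nu}(\mathbb{F}_q)| \;\leq\; q+1+(d-1)(d-2)\sqrt{q},
\end{equation*}
since every affine zero of $f$ contributes at least one $\mathbb{F}_q$-point to the normalization through $\pi^{-1}$.

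The main obstacle, as with any effective form of Weil's bound, is the careful bookkeeping at singular points and at points at infinity: one has to verify that the discrepancy between the smooth-model count $|\tilde{C}^{\nu}(\mathbb{F}_q)|$ and the affine count $N_f$ never exceeds the additive $d$ on the low side, and that nothing beyond the raw Hasse--Weil estimate is lost on the high side. This is standard arithmetic-geometric bookkeeping; since the statement is quoted verbatim from \cite{JS}, in the final write-up I would simply invoke the cited lemma rather than reproduce the proof.
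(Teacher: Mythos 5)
The paper itself offers no proof of this lemma---it is quoted directly from \cite{JS}---so your closing plan to simply invoke the citation matches the paper's treatment exactly. Judged as a proof sketch, however, your normalization argument asserts two comparison inequalities that are false in general, because you implicitly assume each rational affine zero of $f$ corresponds to at least one, and at most roughly one, rational point of $\tilde{C}^{\nu}$. Neither holds at singular points. A rational singular point can have \emph{no} rational preimage at all: at a non-split node, whose two branches are conjugate over $\mathbb{F}_q$ (e.g.\ the origin on $Y^2=X^2(X+a)$ with $a$ a non-square), the point counts in $N_f$ but contributes nothing to $|\tilde{C}^{\nu}(\mathbb{F}_q)|$, so your upper chain $N_f\le|\tilde{C}^{\nu}(\mathbb{F}_q)|$ fails as soon as such points outnumber the rational places at infinity. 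Dually, a split rational node has two rational preimages, and a degree-$d$ curve may carry up to $(d-1)(d-2)/2 > d$ such nodes, so the overcount can exceed the additive $d$ you allow and the lower chain $N_f\ge|\tilde{C}^{\nu}(\mathbb{F}_q)|-d$ fails as well. Your slogan that the singular points are ``absorbed into the genus drop'' is the right idea, but the displayed inequalities never implement it---they simply ignore the singular points.

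The repair is to make the slogan quantitative. Writing $\delta=\sum_P\delta_P$ for the total delta-invariant of the projective curve, one has $g+\delta\le (d-1)(d-2)/2$; the number of branchless rational points and the excess of rational branches over rational points are each at most $\delta$; and therefore $2g\sqrt{q}+2\delta\le 2(g+\delta)\sqrt{q}\le (d-1)(d-2)\sqrt{q}$ since $\sqrt{q}\ge 1$. Feeding these corrections into your point counts recovers both stated bounds. Equivalently---and closer to how the cited result is actually obtained---one applies the Aubry--Perret form of the Weil inequality directly to the (possibly singular) projective model: $\bigl|\,|\tilde{C}(\mathbb{F}_q)|-(q+1)\,\bigr|\le 2\pi_a\sqrt{q}$ with arithmetic genus $\pi_a=(d-1)(d-2)/2$. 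With points counted on $\tilde{C}$ itself rather than on $\tilde{C}^{\nu}$, the affine/projective discrepancy genuinely is at most the $\le d$ distinct points of $\tilde{C}$ on the line at infinity, and both of your displayed inequalities become literally correct with $\tilde{C}$ in place of $\tilde{C}^{\nu}$; the delicate branch bookkeeping is then packaged once and for all inside the Aubry--Perret theorem instead of being redone by hand.
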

The strategy of using the above lemma is as follows. For any DO polynomial $f$, we shall calculate the bivariate polynomial $\Delta_f(X,Y)$. Using the computer algebra package Magma, we shall check the absolute irreducibility of $\Delta_f(X,Y)$. If $\Delta_f(X,Y)$ is absolutely irreducible then the Lemma~\ref{WB} will give a bound on the number of solution $N_{\Delta_f}$ of $\Delta_f(X,Y)$ in $\mathbb{F}_{p^e} \times \mathbb{F}_{p^e}$. We shall then calculate the number of solutions of $\Delta_f(X,Y)$ which can be obtained either by putting $X=0$ or $Y=0$. If $N_{\Delta_f}$ is strictly greater than the number of solutions corresponding to $XY=0$, there will exist a solution $(u,v) \in \mathbb{F}_{p^e}^* \times \mathbb{F}_{p^e}^*$ of $\Delta_f(X,Y)$ and as a consequence, $f$ will not be a planar function.

We shall consider the planarity of the DO polynomials listed in the Appendix \ref{A1} in three different cases, namely, $p=3$, $p=5$ and $p>5$, respectively.

\textbf{Case 1}~Let $p=3$. The planarity of the DO polynomials arising from the polynomials $\mathfrak{D}_{k,0}$ and $\mathfrak{D}_{k,1}$ has already been discussed \cite{CM}. Therefore, it only remains to consider the planarity of the DO polynomials obtained from the polynomial $\mathfrak{D}_{k,2}$. 

(\romannumeral 1)
The planarity of the monomial $X^{3^{\alpha}+1}$ is well-known and it is planar over $\mathbb{F}_{3^e}$ if and only if $ \frac{e}{\gcd(\alpha,e)}$ is odd.

(\romannumeral 2)
The DO polynomial $X^{3(3^{\alpha}+1)}+2aX^{3^{\alpha}+1}$ is planar if and only if $X^3+2aX$ is a permutation polynomial and $X^{3^{\alpha}+1}$ is planar. Therefore it is planar over $\mathbb{F}_{3^e}$ if and only if $ \frac{e}{\gcd(\alpha,e)}$ is odd and $a$ is not a square in $\mathbb{F}_{3^e}$.

(\romannumeral 3) 
In the case of DO polynomial $X^4+aX^2$, consider the difference function 
$$\Delta_f(X,Y)= XYh(X,Y),$$
where $h(X,Y) =X^2+Y^2-a.$ Now, let $z$ be a solution of the equation $X^2-a$ in the algebraic closure of $\mathbb{F}_{3^e}$, then $Y-z \mid Y^2-a$ and since $Y^2-a$ has no repeated root, by Eisenstein's criterion, $X^2+Y^2-a$ is absolutely irreducible. Hence by Lemma \ref{WB}, the number of solution $N_h$ of $h(X,Y)$ in $\mathbb{F}_{3^e}\times\mathbb{F}_{3^e}$ is greater than or equal to $3^e-1$. Now, at most 4 solutions of $h(X,Y)$ can be obtained either by putting $X=0$ or $Y=0$. Hence, when $3^e-1>4$, there must be a solution $(x,y)\in \mathbb{F}_{3^e}^* \times \mathbb{F}_{3^e}^*$. This holds for $e\geq2$, thus $X^4+aX^2$ is not planar over $\mathbb{F}_{3^e}$ for $e\geq2$. For $e=1$, $X^4+aX^2 \equiv (1+a)X^2 \pmod {X^3-X}$. Now since $a\in \mathbb{F}_3^*$  therefore the only choice for $a$ is $1$ or $2$. When $a=1$, $X^2$ is clearly a planar function. When $a=2$, it is a zero polynomial and hence can not be planar.

(\romannumeral 4)
For the DO polynomial $f(X)=X^{10}+a^2X^2$, consider the difference function 
$$\Delta_f(X,Y)= XYh(X,Y),$$
where $h(X,Y) =X^8+Y^8-a^2.$ Now, let $z$ be a solution of the equation $X^8-a^2$ in the algebraic closure of $\mathbb{F}_{3^e}$ then $Y-z \mid Y^8-a^2$ and since $Y^8-a^2$ has no repeated root, by Eisenstein's criterion, $X^8+Y^8-a^2$ is absolutely irreducible. Hence by Lemma \ref{WB}, the number of solution $N_h$ of $h(X,Y)$ in $\mathbb{F}_{3^e}\times\mathbb{F}_{3^e}$ is greater than or equal to $3^e-42\sqrt{3^e}-7$. Now, at most $16$ solutions of $h(X,Y)$ can be obtained either by putting $X=0$ or $Y=0$. Hence when $3^e-42\sqrt{3^e}-7>16$, there must be a solution $(x,y)\in \mathbb{F}_{3^e}^* \times \mathbb{F}_{3^e}^*$. This holds for $e\geq7$, thus $f$ is not planar over $\mathbb{F}_{3^e}$ for $e\geq7$. For $e=1$, $f =(1+a^2)X^2$. Now since $1+a^2 \neq 0$ for all $a\in \mathbb{F}_3^*$, $f$ is clearly a planar function for all $a\in \mathbb{F}_3^*$. Similarly, for $e=2$, $f= (1+a^2)X^2 $, which is planar over $\mathbb{F}_{3^2}$ provided $1+a^2 \neq 0$. Computations show that for $e=3,5$ and $6$, $f$ is not a planar function over $\mathbb{F}_{3^e}$ for all $a\in \mathbb{F}_{3^e}^*$. When $e=4$, 
computations show that $f$ is planar if and only if $a = g^{4n+2}$, where $g$ is the primitive element of $\mathbb{F}_{3^4}$.

(\romannumeral 5)
For the DO polynomial $f(X)=X^{6}+2aX^4$, consider the difference function 
$$\Delta_f(X,Y)= XYh(X,Y),$$
where $h(X,Y) =a(X^2+Y^2)+X^2Y^2$. The computation using Magma reveals that $h(X,Y)$ is absolutely irreducible. Now, at most one solution of $h(X,Y)$ can be obtained either by putting $X=0$ or $Y=0$. Hence, when $3^e-6\sqrt{3^e}-3>1$, there must be a solution $(x,y)\in \mathbb{F}_{3^e}^* \times \mathbb{F}_{3^e}^*$. This holds for $e\geq4$, thus  for $e\geq4$, $f$ is not planar over $\mathbb{F}_{3^e}$. For $e=1$, $f =(1-a)X^2$. Now since $a\in \mathbb{F}_3^*$, therefore the only choice for $a$ is $1$ or $2$. When $a=2$, $2X^2$ is clearly a planar function. When $a=1$, it is a zero polynomial and hence not planar. For $e=2$ and $3$, direct computations show that $f$ is not planar over $\mathbb{F}_{3^e}$ for all $a\in \mathbb{F}_{3^e}^*$.

(\romannumeral 6)
In the case of DO polynomial $f(X)=X^{12}+2aX^{10}+a^3X^6+2a^4X^4$, consider the difference function 
$$\Delta_f(X,Y)= XYh(X,Y),$$
where $h(X,Y) =X^8Y^2+X^2Y^8+2a(X^8+Y^8)+2a^3X^2Y^2+2a^4(X^2+Y^2)$. The computation using Magma reveals that $h(X,Y)$ is absolutely irreducible.
Now, at most $16$ solutions of $h(X,Y)$ can be obtained either by putting $X=0$ or $Y=0$. Hence, when $3^e-72\sqrt{3^e}-9>16$, there must be a solution $(x,y)\in \mathbb{F}_{3^e}^* \times \mathbb{F}_{3^e}^*$. This holds for $e\geq8$, thus  for $e\geq8$, $f$ is not planar over $\mathbb{ F}_{3^e}$. For $e=1$, $f(X)=(1-a)X^2$. Since $a\in \mathbb{F}_3^*$, therefore the only choice for $a$ is $1$ or $2$. When $a=2$, $2X^2$ is clearly a planar function. When $a=1$, it is a zero polynomial and hence can not be planar. The direct computations show that for $e=2$, $f$ is planar over $\mathbb{F}_{3^e}$ for all non-square $a\in \mathbb{F}_{3^2}^*$ and for $3\leq e \leq 7$, $f$ is not planar for all $a\in \mathbb{F}_{3^e}^*.$

(\romannumeral 7)
For the DO polynomial $f(X)=X^{36}+2aX^{28}+2a^3X^{12}+a^4X^4$, notice that $f(X) = (X^9+2aX^7+2a^3X^3+a^4X) \circ X^4$. Therefore, whenever $4\mid (3^e-1)$, i.e, $e$ is even, the cardinality of the image set of $f$ on $\mathbb{F}_{3^e}^*$ will be at most $(3^e-1)/4$ and hence by Lemma \ref{l6.1}, $f$ will not be planar in this case. When $e$ is odd, we could not adopt the strategy that we have used for previous polynomials as in this case the difference function $\Delta_f(X,Y)$ does not turn out to be absolutely irreducible. Therefore, in this case, we are able to provide only computational results for all odd $e$ such that $e<10$. For $e=1$, $f= (2+a)X^2 $, which is planar over $\mathbb{F}_3$ if and only if $a=2$. For $3\leq e \leq 9$ and $e$ odd , the direct computations show that $f$ is planar over $\mathbb{F}_{3^e}$ if $a$ is a non-square in $\mathbb{F}_{3^e}^*$.

\textbf{Case 2} Let $p=5$. The planarity of the DO polynomials obtained from the polynomials $\mathfrak{D}_{k,0}$ and $\mathfrak{D}_{k,1}$ has already been discussed in \cite{CM}. Therefore, we shall discuss the planarity of the DO polynomials obtained from the polynomials $\mathfrak{D}_{k,2}$, $\mathfrak{D}_{k,3}$ and $\mathfrak{D}_{k,4}$.

(\romannumeral 1)
The planarity of the DO polynomial $X^{5^{\alpha}+1}$ is well-known and it is planar over $\mathbb{F}_{5^e}$ if and only if $\frac{\alpha}{\gcd(\alpha,e)}$ is odd.

(\romannumeral 2)
The DO polynomials $X^6+4aX^2$ and $X^6+aX^2$ practically yield the same DO polynomial. Therefore we shall consider the planarity of $f(X)=X^6+4aX^2$. Now consider the difference function
$$\Delta_f(X,Y)=XYh(X,Y),$$
where $h(X,Y)=X^4+Y^4+3a$. Let $z$ be a root of the equation $Y^4+3a=0$ in the algebraic closure of $\mathbb{F}_q$. Then $Y-z\mid Y^4+3a$ and since $Y^4+3a$ has no repeated root, by Eisenstein's criterion, $h(X,Y)$ is absolutely irreducible. Therefore by Lemma \ref{WB} , $h(X,Y)$ will have atleast $5^e-6\sqrt{5^e}-3$ solutions in $\mathbb{F}_{5^e} \times \mathbb{F}_{5^e}$. Now at most $8$ roots can be obtained either by putting $X=0$ or $Y=0$. Therefore, if $5^e-6\sqrt{5^e}-11>0$ then $h(X,Y)$ will have a root in $\mathbb{F}_{5^e}^*\times\mathbb{F}_{5^e}^*$ and $f$ will not be planar. This holds for $e\geq3$ and hence for $e\geq3$, $f$ is not planar over $\mathbb{F}_{5^e}$. For $e=1$, $f(X) = (1-a)X^2$ and is planar over $\mathbb{F}_5$ provided $a\neq1$. In the case of $e=2$, $f$ is planar if and only if $a$ is of the form $g^{4i+3}$ where $g\in \mathbb{F}_{25}^*$ is a root of primitive polynomial $X^2+4X+2$ over $\mathbb{F}_5$.

(\romannumeral 3)
Similarly, the DO polynomials $X^{10}+2aX^6+a^2X^2$, $X^{10}+3aX^6+4a^2X^2$ and $X^{10}+4aX^6+2a^2X^2$ are practically same. Therefore we shall consider the planarity of $f(X)=X^{10}+2aX^6+a^2X^2 = X^2(X^4+a)^2$. Notice that when $-a$ is a fourth power in $\mathbb{F}_{5^e}$, then $f$ has a nonzero root and hence is not planar over $\mathbb{F}_{5^e}$. Therefore we shall consider the case when $-a$ is not a fourth power in $\mathbb{F}_{5^e}.$ For $e=1$, $f = (1+2a+a^2)X^2$ which is planar over $\mathbb{F}_{5}$ if and only if $a \neq 4$. For $e\geq 2$, consider the difference function
$$\Delta_f(X,Y)=2XYh(X,Y),$$
where $h(X,Y)=(a+Y^4)(X^4+a)$. Since $-a$ is not a fourth power in $\mathbb{F}_{5^e}$, thus $\Delta_f(X,Y)$ will not have any root in $\mathbb{F}_{5^e}^*\times \mathbb{F}_{5^e}^*.$ Therefore, for all $Y = \beta \in \mathbb{F}_{5^e}^*$, the linearized polynomial $(\beta^5+a\beta)(X^5+aX)$ is permutation polynomial and hence for $e\geq 2$, $f$ is planar for all $a \in \mathbb{F}_{5^e}$ such that $-a$ is not a fourth power in $\mathbb{F}_{5^e}^*.$

\textbf{Case 3} Let $p>5$. The only DO polynoimals obtained from the polynomial $\mathfrak{D}_{k,m}$ are monomials of the form $AX^{p^{\alpha}+1}$ with $A\in \mathbb{F}_{p^e}^*$, which is planar over $\mathbb{F}_{p^e}$ if and only if $\frac{\alpha}{\gcd(\alpha, e)}$ is odd.

In view of the foregoing discussions, we obtain the following list of planar DO polynomials obtained from the polynomials $\mathfrak D_{k,m}$ over $\mathbb{F}_{p^e}$.
\begin{thm}
 Let $\mathfrak{D}_{k,m}$ be a polynomial defined in the Introduction. Then for $k\geq 2$, the following are the only planar DO polynomials arising from $\mathfrak{D}_{k,m}$.
\begin{enumerate}
\item $X^2$ over $\mathbb{F}_{p^e}$.
 \item $X^{p^\alpha+1}$ over $\mathbb{F}_{p^e}$ with $e/(e,\alpha)$ odd.
 \item $X^{10}+aX^2$ over $\mathbb{F}_{3^4}$ and $a=g^{4n+2}$, where $g$ is the root of the primitive polynomial $X^4+2X^3+2$ over $\mathbb{F}_3.$
 \item $X^{12}+2aX^{10}+a^3X^6+2a^4X^4$ over $\mathbb{F}_{3^2}$, $a\in \mathbb{F}_{3^2}^*$ is not a square.
 \item $X^{36}+2aX^{28}+2a^3X^{12}+a^4X^4$ over $\mathbb{F}_{3^e}$, $1\leq e \leq 9$ and $e$ is odd and $a$ is non-square in $\mathbb{F}_{3^e}$.
 \item $X^6+4aX^2$ over $\mathbb{F}_{5^e}$, $e=2$ and $a=g^{4n+3}$, where $g$ is the root of the primitive polynomial $X^2+4X+2$ over $\mathbb{F}_5.$ 
 \item $X^{10}+2aX^6+a^2X^2$ over $\mathbb{F}_{5^e}$, $a\neq 4$ when $e=1$ and $-a$ is not a fourth power when $e\geq2$. 
\end{enumerate}
\end{thm}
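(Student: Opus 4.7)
The plan is to march through the complete catalogue of DO polynomials arising from $\mathfrak{D}_{k,m}$---as classified in Sections~\ref{S2}--\ref{S5} and collected in Appendix~\ref{A1}---and decide planarity of each entry. The analysis naturally splits by characteristic into three sub-cases: $p>5$, $p=5$, and $p=3$.

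The case $p>5$ is immediate: every DO polynomial produced in the earlier sections is a monomial of the form $AX^{p^{\alpha}+1}$ with $A\in\mathbb{F}_{p^e}^{*}$, and the classical criterion that $e/\gcd(e,\alpha)$ be odd gives items (1) and (2), with (1) being the $\alpha=0$ special case.

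For the remaining polynomials in characteristics $3$ and $5$, I would apply a uniform four-step strategy to each candidate $f(X)$. First, compute the difference polynomial $\Delta_f(X,Y)=f(X+Y)-f(X)-f(Y)$ and factor out the common $XY$, writing $\Delta_f(X,Y)=XY\cdot h(X,Y)$. Second, establish absolute irreducibility of $h(X,Y)$ either by Eisenstein's criterion (applicable whenever $h$ has the shape $X^{n}+Y^{n}+c$, since the one-variable polynomial $Y^{n}+c$ is separable) or via a direct Magma check. Third, apply Lemma~\ref{WB} to $h$ to produce the lower bound $N_{h}\ge p^{e}+1-(d-1)(d-2)\sqrt{p^{e}}-d$ and subtract the bounded contribution coming from the axes $XY=0$; whenever the resulting quantity is positive there exists $(u,v)\in\mathbb{F}_{p^{e}}^{*}\times\mathbb{F}_{p^{e}}^{*}$ with $\Delta_{f}(u,v)=0$, so by Lemma~\ref{l6.1} and the remark that a DO polynomial with a nonzero root cannot be planar, $f$ fails to be planar over $\mathbb{F}_{p^{e}}$. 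Fourth, for the finitely many $e$ below the Weil threshold, run a direct computer search over $a\in\mathbb{F}_{p^{e}}^{*}$ to single out the sporadic planar specializations, which are exactly the exceptions recorded in items (3), (4), (6) and (7).

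The principal obstacle is item (5), $f(X)=X^{36}+2aX^{28}+2a^{3}X^{12}+a^{4}X^{4}$ in characteristic $3$, where $\Delta_{f}(X,Y)$ is \emph{not} absolutely irreducible and the Weil-bound machinery fails outright. To handle it I would exploit the composition $f(X)=g(X)\circ X^{4}$ with $g(X)=X^{9}+2aX^{7}+2a^{3}X^{3}+a^{4}X$: whenever $e$ is even we have $4\mid(3^{e}-1)$, so $X\mapsto X^{4}$ is four-to-one on $\mathbb{F}_{3^{e}}^{*}$, forcing the image set of $f$ on $\mathbb{F}_{3^{e}}^{*}$ to have cardinality at most $(3^{e}-1)/4$; Lemma~\ref{l6.1} then precludes $f$ from being 2-to-1, hence from being planar. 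For odd $e$ no structural obstruction is visible, which is why item~(5) must be stated as a computer-verified result in the restricted range $1\le e\le 9$, with planarity holding precisely when $a$ is non-square. Assembling items (1)--(7) across the three characteristic sub-cases completes the proof.
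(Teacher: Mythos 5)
Your overall architecture — split by characteristic, factor $\Delta_f(X,Y)=XY\,h(X,Y)$, prove absolute irreducibility of $h$ by Eisenstein or Magma, invoke the Weil bound of Lemma~\ref{WB} to force a root off the axes for large $e$, finish the small $e$ by machine, and treat the $X^{36}$ polynomial separately via $f=g\circ X^4$ — is exactly the paper's strategy, and it correctly disposes of items (1)--(6).

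However, there is a genuine gap at item (7). You file $X^{10}+2aX^6+a^2X^2$ over $\mathbb{F}_{5^e}$ among the ``sporadic planar specializations'' to be ``singled out by a direct computer search for the finitely many $e$ below the Weil threshold.'' That cannot work: item (7) asserts planarity for \emph{every} $e\ge 2$ whenever $-a$ is not a fourth power, which is an infinite family and hence unreachable by finite computation; and your step 2 fails as well, because here $h(X,Y)=(X^4+a)(Y^4+a)$ is visibly reducible, so the Weil-bound machinery never applies in either direction. The paper's treatment is structural: $f(X)=(X^5+aX)^2$, so $\Delta_f(X,Y)=2(X^5+aX)(Y^5+aY)=2XY(X^4+a)(Y^4+a)$. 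If $-a$ is a fourth power then $f$ has a nonzero root and, being a DO polynomial, cannot be $2$-to-$1$ (Lemma~\ref{l6.1}); if $-a$ is not a fourth power then for each $\beta\in\mathbb{F}_{5^e}^*$ the difference function $\Delta_f(X,\beta)=2\beta(\beta^4+a)(X^5+aX)$ is a nonzero scalar multiple of the linearized polynomial $X^5+aX$, whose only root is $0$, hence a permutation — giving planarity for all $e\ge 2$. Your proposal needs this (or an equivalent) argument; without it, item (7) is unproved. A similar caution applies in spirit to the $p=3$ family $X^{3(3^\alpha+1)}+2aX^{3^\alpha+1}$, which the paper also resolves by a compositional/structural argument rather than by the Weil bound.
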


\section*{Acknowledgments}
The research of Sartaj Ul Hasan is partially supported by Start-up Research Grant SRG/2019/000295 from the Science and Engineering Research Board, Government of India.

\appendix 
\section{The complete list of DO polynomials} \label{A1}

 The complete list of DO polynomials obtained from the polynomials $\mathfrak D_{k,m}$ over a finite field $\mathbb F_q$ of 
 odd characteristic $p$ is given as follows.
 \begin{enumerate}
  \item For $p=3$
 \begin{enumerate}
  \item When $m \equiv 0~(\mbox{mod}~3)$  \cite[Theorem 2.1]{CM}
  \begin{enumerate}
   \item $k=p^{\ell}$ then $X^{3^{n+\ell}(3^{\alpha}+1)}$ for non-negative integers $\alpha$, $n$ and $\ell$.
   \item $k=2p^{\ell}$ then $X^{3^{n+\ell}(3^{\alpha}+1)}$ for non-negative integers $\alpha$, $n$ and $\ell$.
   \item $k=4p^{\ell}$ then $X^{4\cdot3^{n+\ell}}+2aX^{2\cdot3^{n+\ell}}$ for non-negative integers $n$ and $\ell$.
   \item $k=5p^{\ell}$ then $X^{10\cdot3^{n+\ell}}+aX^{2\cdot3^{n+\ell+1}}+2a^{2}X^{2\cdot3^{n+\ell}}$ for non-negative integers $n$ and $\ell$.
  \end{enumerate}
  \item When $m \equiv 1~(\mbox{mod}~3)$ \cite[Theorem 3.1]{CM}
  \begin{enumerate}
   \item $k=1,2,4$ then $X^{3^n(3^{\alpha}+1)}$ for non-negative integers $\alpha$ and $n$.
   \item $k=3$ then $X^{3^{n+1}(3^{\alpha}+1)}+aX^{3^n(3^{\alpha}+1)}$ for non-negative integers $\alpha$ and $n$.
   \item $k=5$ then $X^{10\cdot3^{n}}+2aX^{2\cdot3^{n+1}}$ for non-negative integer $n$. 
   \item $k=6$ then $X^{2\cdot3^{n+1}}+aX^{4\cdot3^n}$ for non-negative integer $n$.
   \item $k=7$ then $X^{28\cdot3^{n}}+a^2X^{4\cdot3^{n+1}}+2a^3X^{4\cdot3^{n}}$ for non-negative integer $n$.
   \item $k=9$ then $X^{4\cdot3^{n+2}}+aX^{28\cdot3^{n}}+a^3X^{4\cdot3^{n+1}}+2a^4X^{4\cdot3^{n}}$ for non-negative integer $n$.
   \item $k=10$ then $X^{10\cdot3^{n}}+a^2X^{2\cdot3^{n+1}}+a^3X^{4\cdot3^{n}}$ for non-negative integer $n$.
   \item $k=12$ then $X^{4\cdot3^{n+1}}+aX^{10\cdot3^{n}}+a^4X^{4\cdot3^{n}}$ for non-negative integer $n$.
  \end{enumerate}
  \item When $m \equiv 2~(\mbox{mod}~3)$
  \begin{enumerate}
   \item $k=1,2$ then $X^{3^n(3^{\alpha}+1)}$ for non-negative integers $\alpha$ and $n$.
   \item $k=3$ then $X^{3^{n+1}(3^{\alpha}+1)}+2aX^{3^n(3^{\alpha}+1)}$ for non-negative integers $\alpha$ and $n$.
   \item $k=4$ then $X^{4\cdot3^{n}}+aX^{2\cdot3^{n}}$ for non-negative integer $n$. 
   \item $k=5$ then $X^{10\cdot3^{n}}+a^2X^{2\cdot3^n}$ for non-negative integer $n$.
   \item $k=6$ then $X^{2\cdot3^{n+1}}+2aX^{4\cdot3^{n}}$ for non-negative integer $n$.
   \item $k=9$ then $X^{4\cdot3^{n+2}}+2aX^{28\cdot3^{n}}+2a^3X^{4\cdot3^{n+1}}+a^4X^{4\cdot3^{n}}$ for non-negative integer $n$.
   \item $k=12$ then $X^{4\cdot3^{n+1}}+2aX^{10\cdot3^{n}}+a^3X^{2\cdot3^{n+1}}+2a^4X^{4\cdot3^{n}}$ for non-negative integer $n$.
  \end{enumerate}
 \end{enumerate}
 \item For $p=5$
  \begin{enumerate}
  \item When $m \equiv 0~(\mbox{mod}~5)$  \cite[Theorem 2.1]{CM}
  \begin{enumerate}
   \item $k=5^{\ell}$ then $X^{5^{n+\ell}(5^{\alpha}+1)}$ for non-negative integers $\alpha$, $n$ and $\ell$.
   \item $k=2.5^{\ell}$ then $X^{5^{n+\ell}(5^{\alpha}+1)}$ for non-negative integers $\alpha$, $n$ and $\ell$.
   \item $k=3.5^{\ell}$ then $X^{6\cdot5^{n+\ell}}+2aX^{2\cdot5^{n+\ell}}$ for non-negative integers $n$ and $\ell$.
  \end{enumerate}
  \item When $m \equiv 1~(\mbox{mod}~5)$ \cite[Theorem 3.1]{CM}
  \begin{enumerate}
   \item $k=1,2$ then $X^{5^n(5^{\alpha}+1)}$ for non-negative integers $\alpha$ and $n$.
   \item $k=3$ then $X^{6\cdot5^{n}}+3aX^{2\cdot5^n}$ for non-negative integers $\alpha$ and $n$.
   \item $k=5$ then $X^{2\cdot5^{n+1}}+aX^{6\cdot5^{n}}+3a^2X^{2\cdot5^{n}}$ for non-negative integer $n$.
   \item $k=9$ then $X^{6\cdot5^{n}}+a^2X^{2\cdot5^{n}}$ for non-negative integer $n$.
   \end{enumerate}
  \item When $m \equiv 2~(\mbox{mod}~5)$
  \begin{enumerate}
   \item $k=1,2$ then $X^{5^n(5^{\alpha}+1)}$ for non-negative integers $\alpha$ and $n$.
   \item $k=3$ then $X^{6\cdot5^{n}}+4aX^{2\cdot5^{n}}$ for non-negative integer $n$.
   \item $k=5$ then $X^{2\cdot5^{n+1}}+2aX^{6\cdot5^{n}}+a^2X^{2\cdot5^{n}}$ for non-negative integer $n$.
  \end{enumerate}
  \item When $m \equiv 3~(\mbox{mod}~5)$
  \begin{enumerate}
  \item $k=1,2,3$ then $X^{5^n(5^{\alpha}+1)}$ for non-negative integers $\alpha$ and $n$.
  \item $k=5$ then $X^{2\cdot5^{n+1}}+3aX^{6\cdot5^{n}}+4a^2X^{2\cdot5^{n}}$ for non-negative integer $n$.
  \end{enumerate}
  \item When $m \equiv 4~(\mbox{mod}~5)$
  \begin{enumerate}
   \item $k=1,2,4$ then $X^{5^n(5^{\alpha}+1)}$ for non-negative integers $\alpha$ and $n$.
   \item $k=3$ then $X^{6\cdot5^{n}}+aX^{2\cdot5^{n}}$ for non-negative integer $n$.
   \item $k=5$ then $X^{2\cdot5^{n+1}}+4aX^{6\cdot5^{n}}+2a^2X^{2\cdot5^{n}}$ for non-negative integer $n$.
  \end{enumerate}
 \end{enumerate}
 \item For $p>5$, we obtain only one DO polynomial given by $X^{p^{n}(p^{\alpha}+1)}$ for some non-negative integers $\alpha$ and $n$.
 \end{enumerate}

\end{document}